\pgfplotsset{compat=1.13}
\newtheorem{theorem}{Theorem}
\newtheorem{lemma}[theorem]{Lemma}
\begin{document}
  

\title{Efficient and Accurate Algorithms for Solving the Bethe-Salpeter Eigenvalue Problem for Crystalline Systems}
  
\author[$\dagger$]{Peter Benner}
\affil[$\dagger$]{Max Planck Institute for Dynamics of Complex Technical Systems, \authorcr
    Sandtorstr. 1, 39106 Magdeburg, Germany.}
  
\author[$\dagger\ast$]{Carolin Penke}
\affil[$\ast$]{Corresponding author.  \email{penke@mpi-magdeburg.mpg.de}}
  
\shorttitle{New Algorithms for Bethe-Salpeter Eigenvalue problems}
\shortauthor{P. Benner, C.Penke}
  
\keywords{Bethe-Salpeter, Many-body Perturbation Theory, Structured Eigenvalue Problem, Efficient Algorithms, Matrix Square Root, Cholesky Factorization, Singular Value Decomposition}

\msc{15A18, 65F15}
  
\abstract{%
Optical properties of materials related to light absorption and scattering are explained by the excitation of electrons. The Bethe-Salpeter equation is the state-of-the-art approach to describe these processes from first principles (\emph{ab initio}), i.e. without the need for empirical data in the model. To harness the predictive power of the equation, it is mapped to an eigenvalue problem via an appropriate discretization scheme. The eigenpairs of the resulting large, dense, structured matrix can be used to compute dielectric properties of the considered crystalline or molecular system. The matrix always shows a $2\times 2$ block structure. Depending on exact circumstances and discretization schemes, one ends up with a matrix structure such as 
\begin{alignat*}{2}
 H_1  &= \begin{bmatrix}
         A & B \\
         -B & -A
        \end{bmatrix}\in\mathbb{C}^{2n\times 2n},\qquad &A=A^{\herm},\ B=B^{\herm},
        \\
 \text{or }\qquad H_2  &= \begin{bmatrix}
         A & B \\
         -B^{\herm} & -A^{\tran}
        \end{bmatrix}\in\mathbb{C}^{2n\times 2n} \text{ or } \mathbb{R}^{2n\times 2n},\qquad &A=A^{\herm},\ B=B^{\tran}.
\end{alignat*}
Additionally, certain definiteness properties typically hold. $H_1$ can be acquired for crystalline systems \cite{SanMK14},  $H_2$ is a more general form found e.g. in \cite{ShaDYetal16} and \cite{PenMVetal20}, which can for example be used to study molecules.
In this work, we present new theoretical results characterizing the structure of $H_1$ and $H_2$ in the language of non-standard scalar products. These results enable us to develop a new perspective on the state-of-the-art solution approach for matrices of form $H_1$. This new viewpoint is used to develop two new methods for solving the eigenvalue problem. One requires less computational effort while providing the same degree of accuracy. The other one improves the expected accuracy, compared to methods currently in use, with a comparable performance. Both methods are well suited for high performance environments and only rely on basic numerical linear algebra building blocks.}
  
\maketitle

  
\section{Introduction and Preliminaries}
The accurate and efficient computation of optical properties of molecules and condensed matter has been an objective actively pursued in recent years \cite{SanMK14, VorACetal19, SagA09}. In particular, the increasing importance of renewable energies reinforces the interest in the \textit{in silico} prediction of optical properties of novel composite materials and nanostructures. 

New theoretical and algorithmic developments need to go hand in hand with the ever advancing computer technology. In view of the ongoing massive increase in parallel computing power \cite{top500}, the solution of problems that were considered almost impossible just a few years ago comes within reach. In order to unlock the full potential of a supercomputer, great attention must be paid to the development of parallelizable and reliable methods. 

\textit{Ab initio} spectroscopy aims to compute optical properties of materials from first principles, without the need for empirical parameters.  A state-of-the-art approach is derived from many-body perturbation theory and relies on solving the Bethe-Salpeter equation for the \emph{density fluctuation response function} $P(\omega)$. This function describes the propagation of an electron-hole pair and is used to compute optical properties such as the optical absorption spectrum. Its poles give the excitation energies of the given system  (for details see \cite{SanMK14}). Restricting the number of considered  occupied and unoccupied orbitals, the propagator can be represented in (frequency-dependent) matrix form with respect to a set of resonant and antiresonant two-orbital states. The Bethe-Salpeter equation can be rewritten to show that the inverse of the matrix form of $P(\omega)$ is approximated by the matrix pencil 
\begin{align}\label{Eq:BSEPencil}
 \mathcal{H} - \omega \Sigma,
\end{align}
where $\Sigma=\begin{bmatrix}
               I_n & 0 \\
               0 & -I_n
              \end{bmatrix}$ contains a positive and a negative identity matrix on its diagonal.
 $\mathcal{H}\in\mathbb{C}^{2n\times 2n}$ is a Hermitian matrix. It is computed from the Coulomb interaction and the screened interaction in matrix form (familiar from Hedin's equations \cite{HedL70}) and scalar energy differences between occupied and unoccupied  orbitals. The periodicity of crystalline systems implies a time-inversion symmetry in the basis functions used for the discretization.
This leads to $\mathcal{H}$ having the following form: 
\begin{align}\label{Eq:EVform1}
\mathcal{H}=\mathcal{H}_1 = \begin{bmatrix}
         A & B \\
         B & A
        \end{bmatrix},\qquad A=A^{\herm},\ B=B^{\herm}.
\end{align}

In this paper we do not consider the generalized eigenvalue problem given in \ref{Eq:BSEPencil}. Instead, we focus on the the corresponding standard eigenvalue problem of the matrix
\begin{align}\label{Eq:Hform1}
 H_1:=\Sigma^{-1}\mathcal{H}_1 = \Sigma \mathcal{H}_1 = \begin{bmatrix}
         A & B \\
         -B & -A
        \end{bmatrix},\qquad A=A^{\herm},\ B=B^{\herm},
\end{align}
to which we refer as a \emph{BSE matrix of form I}.

If the time-inversion symmetry in the basis functions is not exploited or not available, i.e. when a non-crystalline system is considered, the resulting structure is slightly different. The matrix considered for a standard eigenvalue problem then  turns out to have the form
\begin{align}\label{Eq:Hform2}
 H_2 := \Sigma \mathcal{H}_2 = \begin{bmatrix}
         A & B \\
         -B^{\herm} & -A^{\tran}
        \end{bmatrix}\in\mathbb{C}^{2n\times 2n} \text{ or } \mathbb{R}^{2n\times 2n},\qquad A=A^{\herm},\ B=B^{\tran},
\end{align}
where $.^{\herm}$ denotes the Hermitian transpose, and $.^{\tran}$ denotes the regular transpose without complex conjugation.
We call matrices of this form \emph{BSE matrix of form II}.

In this paper, we contribute new methods that exploit the structure given in \eqref{Eq:Hform1} and improve upon previous approaches in terms of performance and accuracy. They are well-suited for high performance computing as they rely on basic linear algebra building blocks for which high performance implementations are readily available. 

We characterize the structure of the BSE matrices \eqref{Eq:Hform1} and \eqref{Eq:Hform2} by employing the concept of non-standard scalar products. We introduce the notation and concepts following \cite{MacMT05}.

A nonsingular matrix $M$ defines a scalar product on $\mathbb{C}^{n}$ $\langle.,.\rangle_M$, which is a bilinear or sesquilinear form,  given by

\begin{align*}
\langle x, y \rangle_M = \begin{cases}
                          x^{\tran} My\text{ for bilinear forms,}\\
                          x^{\herm} My\text{ for sesquilinear forms,}
                         \end{cases} 
\end{align*}
for $x,y\in\mathbb{C}^{n}$. For a matrix $A\in\mathbb{C}^{n\times n}$,  $A^{\star_M}\in\mathbb{C}^{n\times n}$ denotes the adjoint with respect to the scalar product defined by $M$. This is a uniquely defined matrix satisfying the identity
\begin{align*}
 \langle Ax, y \rangle_M = \langle x, A^{\star_M}y \rangle_M
\end{align*}
for all $x,y\in\mathbb{C}^{n}$. We call $A^{\star_M}$ the $M$-adjoint of $A$ and it holds
\begin{align}\label{Eq:StarM}
 A^{\star_M} = M^{-1}A^*M,
\end{align}
where $.^*$ can refer to the transpose $.^{\tran}$ or Hermitian transpose $.^{\herm}$, depending on whether a bilinear or a sesquilinear form is considered. 
We call the matrix $M$-orthogonal if $A^{\star_M} = A^{-1}$ (given the inverse exists), $M$-self-adjoint if $A=A^{\star_M}$ and $M$-skew-adjoint if $A=-A^{\star_M}$.
\section{Results on the spectral structure of BSE matrices}\label{Sec:Theory}
We now have a language to describe the structure of the BSE matrices in a more concise way, not relying on the matrix block structure. The following two matrices, and the scalar products induced by them, play a central role:
\begin{align}
 J_n = \begin{bmatrix}
 0 & I_n\\
 -I_n & 0
\end{bmatrix},\qquad
\Sigma_n = \begin{bmatrix}
            I_n &0 \\
            0& -I_n
           \end{bmatrix}.
\end{align}
We drop the index when the dimension is clear from its context. The identities $J^{-1} = -J$ and $\Sigma^{-1}=\Sigma$ are regularly used in the following.

\begin{theorem}\label{Thm:Form1}
 A matrix $H\in\mathbb{C}^{2n\times 2n}$ is a BSE matrix of form I as given in \eqref{Eq:Hform1} if and only if both of the following conditions hold.
 \begin{enumerate}
  \item $H$ is skew-adjoint with respect to the complex sesquilinear form induced by $J$, i.e. $H = -H^{\star_J} = JH^{\herm}J$.
  \item $H$ is self-adjoint with respect to the complex sesquilinear form induced by $\Sigma$, i.e. $H=H^{\star_\Sigma} = \Sigma H^{\herm} \Sigma$.
 \end{enumerate}
\end{theorem}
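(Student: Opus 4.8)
My plan is to reduce both conditions to elementary Hermiticity statements about the $n\times n$ blocks of $H$ and then compare with the block pattern in \eqref{Eq:Hform1}, running the equivalence in both directions at once. To do this I would first use $J^{-1}=J^\herm=-J$ and $\Sigma^{-1}=\Sigma^\herm=\Sigma$, together with \eqref{Eq:StarM}, to restate the two conditions without $M$-adjoints. Multiplying $H=JH^\herm J$ on the left by $J^{-1}=-J$ gives $-JH=H^\herm J$; since $(JH)^\herm=H^\herm J^\herm=-H^\herm J$, this is the same as $JH=(JH)^\herm$. In the same way $H=\Sigma H^\herm\Sigma$ rewrites as $\Sigma H=H^\herm\Sigma=(\Sigma H)^\herm$. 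Thus condition~1 says exactly that $JH$ is Hermitian, condition~2 says exactly that $\Sigma H$ is Hermitian, and the theorem reduces to the claim that $H$ has the form \eqref{Eq:Hform1} if and only if $JH$ and $\Sigma H$ are both Hermitian.

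Next I would partition $H=\begin{bmatrix}H_{11}&H_{12}\\H_{21}&H_{22}\end{bmatrix}$ with $H_{ij}\in\mathbb{C}^{n\times n}$ and evaluate
\[
 JH=\begin{bmatrix}H_{21}&H_{22}\\-H_{11}&-H_{12}\end{bmatrix},\qquad
 \Sigma H=\begin{bmatrix}H_{11}&H_{12}\\-H_{21}&-H_{22}\end{bmatrix}.
\]
Imposing $JH=(JH)^\herm$ yields $H_{21}=H_{21}^\herm$, $H_{12}=H_{12}^\herm$ and $H_{22}=-H_{11}^\herm$, while imposing $\Sigma H=(\Sigma H)^\herm$ yields $H_{11}=H_{11}^\herm$, $H_{22}=H_{22}^\herm$ and $H_{21}=-H_{12}^\herm$. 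Combining these, $H_{11}$ is Hermitian, so I set $A:=H_{11}$; then $H_{22}=-H_{11}^\herm=-A$ (consistently Hermitian), $H_{12}$ is Hermitian so I set $B:=H_{12}$, and $H_{21}=-H_{12}^\herm=-B$. This is precisely $H=\begin{bmatrix}A&B\\-B&-A\end{bmatrix}$ with $A=A^\herm$ and $B=B^\herm$. For the converse, substituting this block pattern into the two displayed formulas shows directly that $JH$ and $\Sigma H$ are Hermitian, so every implication above reverses.

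I do not expect a genuine obstacle in this argument: once the two conditions are recast in the ``$JH$ and $\Sigma H$ Hermitian'' form, everything is bookkeeping. The only point that needs care is tracking the signs introduced by $J$, which is what forces $H_{22}=-A$ (rather than $H_{22}=A$) and thereby collapses the four a priori independent blocks down to the two independent Hermitian matrices $A$ and $B$.
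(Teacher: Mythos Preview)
Your proof is correct and follows essentially the same approach as the paper: both recast the two adjoint conditions as ``$JH$ is Hermitian'' and ``$\Sigma H$ is Hermitian'', then read off the block constraints from a $2\times 2$ partition of $H$. Your write-up is slightly more explicit about why the adjoint conditions are equivalent to these Hermiticity statements, but the argument is the same.
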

\begin{proof}
$JH^{\herm}J = H$ is equivalent to  $JH$ being Hermitian and $\Sigma H^{\herm} \Sigma =H$ is equivalent to $\Sigma H$ being Hermitian. We observe that $JH$ and $\Sigma H$ are Hermitian, if $H$ has BSE form I. Conversely, let  $H=\begin{bmatrix}
        H_{11} & H_{12}\\
        H_{21} & H_{22}
       \end{bmatrix},\quad H_{ij}\in\mathbb{C}^{n\times n}$ and $JH = \begin{bmatrix}
       H_{21} & H_{22}\\
       -H_{11} & -H_{12}
      \end{bmatrix}$ be Hermitian. It follows
\begin{align}
 H_{21}=H_{21}^{\herm}, \quad H_{12}=H_{12}^{\herm}, \quad H_{11}=-H_{22}^{\herm}.\label{BSEJK1}
\end{align}
Let $\Sigma H = \begin{bmatrix}
        H_{11} & H_{12}\\
        -H_{21} & -H_{22}
       \end{bmatrix}$ be Hermitian. It follows
\begin{align}
 H_{11}=H_{11}^{\herm},\quad H_{22}=H_{22}^{\herm},\quad H_{21}=-H_{12}^{\herm}.\label{BSEJK2}
\end{align}
\eqref{BSEJK1} and \eqref{BSEJK2} give exactly BSE form I:
\begin{align*}
 H= \begin{bmatrix}
     H_{11} & H_{12}\\
     -H_{12}^{\herm} & -H_{11}^{\herm}
    \end{bmatrix}\text{ with } H_{11}=H_{11}^{\herm},\ H_{12}=H_{12}^{\herm}.
\end{align*}
\end{proof}

\begin{theorem}\label{Thm:Form2}
 A matrix $H$ is a BSE matrix of form II as given in \eqref{Eq:Hform2} if and only if both of the following conditions hold.
 \begin{enumerate}
  \item $H$ is skew-adjoint with respect to the complex bilinear form induced by $J$, i.e. $H = -H^{\star_J} = JH^{\tran}J$.
  \item $H$ is self-adjoint with respect to the complex sesquilinear form induced by $\Sigma$, i.e. $H=H^{\star_\Sigma} = \Sigma H^{\herm} \Sigma$.
 \end{enumerate}
\end{theorem}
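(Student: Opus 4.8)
The plan is to mirror the proof of Theorem~\ref{Thm:Form1} almost verbatim, replacing the Hermitian transpose by the ordinary transpose in exactly the one place where it matters, namely the $J$-condition. First I would record the two translations: by \eqref{Eq:StarM}, the condition $H = -H^{\star_J}$ for the \emph{bilinear} form reads $H = -J^{-1}H^{\tran}J = JH^{\tran}J$, which is equivalent to $JH$ being complex symmetric (i.e. $(JH)^{\tran} = JH$); the condition $H = H^{\star_\Sigma}$ for the \emph{sesquilinear} form reads $H = \Sigma H^{\herm}\Sigma$, equivalent to $\Sigma H$ being Hermitian. These two reformulations are the whole content of the theorem, so the proof reduces to checking that ``$JH$ symmetric and $\Sigma H$ Hermitian'' is the same as ``$H$ has form II''.

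Next I would do the block computation. Writing $H = \begin{bmatrix} H_{11} & H_{12}\\ H_{21} & H_{22}\end{bmatrix}$ with $H_{ij}\in\mathbb{C}^{n\times n}$, we have $JH = \begin{bmatrix} H_{21} & H_{22}\\ -H_{11} & -H_{12}\end{bmatrix}$, so $(JH)^{\tran} = JH$ forces $H_{21} = H_{21}^{\tran}$, $H_{12} = H_{12}^{\tran}$, and $H_{22} = -H_{11}^{\tran}$. Separately, $\Sigma H = \begin{bmatrix} H_{11} & H_{12}\\ -H_{21} & -H_{22}\end{bmatrix}$, so $(\Sigma H)^{\herm} = \Sigma H$ forces $H_{11} = H_{11}^{\herm}$, $H_{22} = H_{22}^{\herm}$, and $H_{21} = -H_{12}^{\herm}$. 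Combining: $H_{11} = A$ with $A = A^{\herm}$; set $B := H_{12}$, which is symmetric, $B = B^{\tran}$; then $H_{21} = -H_{12}^{\herm} = -B^{\herm}$; and $H_{22} = -H_{11}^{\tran} = -A^{\tran}$ (consistent with $H_{22} = H_{22}^{\herm}$ since $(A^{\tran})^{\herm} = \overline{A} = A^{\tran}$ using $A = A^{\herm}$). This is precisely \eqref{Eq:Hform2}. The converse direction is the same computation read backwards: if $H$ has form II, substitute $H_{11} = A$, $H_{12} = B$, $H_{21} = -B^{\herm}$, $H_{22} = -A^{\tran}$ into the expressions for $JH$ and $\Sigma H$ and verify symmetry resp. Hermitian-ness directly, using $A = A^{\herm}$ and $B = B^{\tran}$.

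I do not expect a genuine obstacle here; the only thing to be careful about is bookkeeping of which form ($\tran$ vs.\ $\herm$) goes with which matrix ($J$ bilinear vs.\ $\Sigma$ sesquilinear), and checking that the two sets of constraints on $H_{22}$ are compatible rather than contradictory — they are, precisely because $A = A^{\herm}$ makes $A^{\tran} = \overline{A}$ Hermitian. A secondary point worth a sentence is the real case: when $H\in\mathbb{R}^{2n\times 2n}$, $\herm$ and $\tran$ coincide and form II degenerates to the real version $\begin{bmatrix} A & B\\ -B & -A\end{bmatrix}$ with $A = A^{\tran}$, $B = B^{\tran}$, so no separate argument is needed. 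I would keep the write-up parallel in phrasing to the proof of Theorem~\ref{Thm:Form1} so the reader immediately sees it is the ``same proof with $\herm \rightsquigarrow \tran$ in condition~1.''
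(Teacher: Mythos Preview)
Your proposal is correct and is precisely the approach the paper takes: the paper's proof is a one-line remark that the argument of Theorem~\ref{Thm:Form1} goes through verbatim once the Hermitian transpose associated with $J$ is replaced by the ordinary transpose, and you have simply written out that argument in full. The extra care you take in checking the consistency of the two constraints on $H_{22}$ (and the analogous one on $H_{21}$) is a nice touch that the paper leaves implicit.
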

\begin{proof}
The proof works exactly as the proof of Theorem \ref{Thm:Form1}, but here, the complex transpose $.^{\tran}$ is associated with $J$ instead of the Hermitian transpose $.^{\herm}$.
\end{proof}

Using this new characterization, we see that eigenvalues and eigenvectors also exhibit special structures. Matrices, that are skew-adjoint with respect to the sesquilinear form induced by $J$ are called \emph{Hamiltonian}, and play an important role in control theory and model order reduction (see e.g. \cite{BenKM05}).  The same property with respect to the bilinear form is called \emph{J-symmetric} in \cite{MacMT05} and explored further in \cite{BenFY18}.

The first two propositions of the following theorem are well known facts about Hamiltonian \cite{BenKM05} and J-symmetric matrices \cite{BenFY18}.  Here, $\bar{\lambda}$ denotes the complex conjugate of $\lambda$.

\begin{theorem}\label{Thm:EValSym}
 Let $H\in\mathbb{C}^{2n\times 2n}$.
 
 \begin{enumerate}
  \item If $H$ is skew-adjoint with respect to the sesquilinear form induced by $J$, i.e. $JH = -H^{\herm}J$, then its eigenvalues come in pairs $(\lambda, -\bar{\lambda})$. If $x$ is a right eigenvector of $H$ corresponding to $\lambda$, then $x^{\herm}J$ is the left eigenvector of $H$ corresponding to $-\bar{\lambda}$.
  \item If $H$ is skew-adjoint with respect to the bilinear form induced by $J$, i.e. $JH = -H^{\tran}J$, then its eigenvalues come in pairs $(\lambda, -{\lambda})$. If $x$ is a right eigenvector of $H$ corresponding to $\lambda$, then $x^{\tran}J$ is the left eigenvector of $H$ corresponding to $-{\lambda}$.
  \item If $H$ is self-adjoint with respect to the sesquilinear form induced by $\Sigma$, i.e. $\Sigma H = H^{\herm}\Sigma$, then eigenvalues come in pairs $(\lambda, \bar{\lambda})$. If $x$ is a right eigenvector of $H$ corresponding to $\lambda$, then $x^{\herm}\Sigma$ is the left eigenvector of $H$ corresponding to $\bar{\lambda}$.
 \end{enumerate}
\end{theorem}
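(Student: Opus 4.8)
The plan is to handle all three parts by the same device: in each case the hypothesis is an identity of the shape $KH = \varepsilon\,H^{\circ}K$ with $K\in\{J,\Sigma\}$ nonsingular, $\circ\in\{\herm,\tran\}$, and $\varepsilon\in\{+1,-1\}$, and from a right eigenpair $(\lambda,x)$ one manufactures a left eigenvector by forming $x^{\circ}K$. For part~1, starting from $Hx=\lambda x$ together with $JH=-H^{\herm}J$, I would compute
\begin{align*}
 (x^{\herm}J)H = x^{\herm}(JH) = -x^{\herm}H^{\herm}J = -(Hx)^{\herm}J = -\bar{\lambda}\,(x^{\herm}J),
\end{align*}
so $x^{\herm}J$ is a left eigenvector of $H$ associated with $-\bar{\lambda}$; it is nonzero because $x\neq 0$ and $J$ is invertible.

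Next I would record the elementary fact that a left eigenvalue is also a (right) eigenvalue: if $v^{\herm}H=\mu v^{\herm}$ with $v\neq 0$, then $H^{\herm}v=\bar{\mu}v$, and since $H$ and $H^{\herm}$ have conjugate spectra, $\mu$ is an eigenvalue of $H$. Combined with the computation above, this shows that $-\bar{\lambda}$ is an eigenvalue of $H$ whenever $\lambda$ is, which is exactly the claimed pairing, and it identifies the paired left eigenvector as stated.

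Parts~2 and~3 are then the identical argument with the obvious substitutions. For part~2 I would use the bilinear relation $JH=-H^{\tran}J$ and the covector $x^{\tran}J$, obtaining $(x^{\tran}J)H=-\lambda\,(x^{\tran}J)$, and conclude $-\lambda$ is an eigenvalue of $H$ from the fact that $H^{\tran}$ and $H$ have the same spectrum. For part~3 I would use $\Sigma H=H^{\herm}\Sigma$ and the covector $x^{\herm}\Sigma$, obtaining $(x^{\herm}\Sigma)H=\bar{\lambda}\,(x^{\herm}\Sigma)$, and conclude as in part~1 that $\bar{\lambda}$ is an eigenvalue of $H$.

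I expect no serious obstacle here; the only points needing a word of care are (i) that the multiplier $K$ is nonsingular, so the constructed covector is genuinely nonzero, and (ii) the translation between ``left eigenvector / left eigenvalue of $H$'' and ``eigenvector / eigenvalue of $H^{\circ}$'', together with the observations that transposition preserves the spectrum while Hermitian transposition conjugates it. Equivalently, one may phrase the whole thing as a similarity: from $KH=\varepsilon H^{\circ}K$ one gets $H=\varepsilon\,K^{-1}H^{\circ}K$, so $H$ is similar to $\varepsilon H^{\circ}$, whose eigenvalues are $\varepsilon\bar{\lambda}$ (for $\circ=\herm$) or $\varepsilon\lambda$ (for $\circ=\tran$). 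If one wanted the sharper statement that the pairing respects Jordan structure, this similarity viewpoint applied to $H-\lambda I$ delivers it, but the statement as given only asserts the pairing of eigenvalues and the explicit form of the paired left eigenvector, which the direct computation above already yields.
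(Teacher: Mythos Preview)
Your proposal is correct and follows essentially the same approach as the paper: both arguments take the right eigenpair, multiply by the structure matrix, and use the adjoint identity to read off the left eigenvector relation directly. Your write-up is simply more explicit than the paper's---you spell out why the constructed covector is nonzero and why a left eigenvalue is an eigenvalue of $H$---whereas the paper compresses each part to a one-line equivalence.
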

\begin{proof}
\begin{enumerate}
 \item Using $H^{\herm}=JHJ$ and $J^{-1}=-J$, we see that
  \begin{align*}
  Hx =\lambda x\quad
  \Leftrightarrow \quad x^{\herm} J H  = - \bar{\lambda} x^{\herm} J.
 \end{align*}

 \item Using $H^{\tran}=JHJ$ and $J^{-1}=-J$, we see that
 \begin{align*}
  Hx =\lambda x\quad
  \Leftrightarrow \quad x^{\tran} J H  = - \lambda x^{\tran} J.
 \end{align*}
 \item Using $H^{\herm}=\Sigma H \Sigma $ and $\Sigma^{-1}=\Sigma$, we see that
 \begin{align*}
  Hx =\lambda x\quad
  \Leftrightarrow \quad x^{\herm} \Sigma H  = \bar{\lambda} x^{\herm} \Sigma.
 \end{align*}

\end{enumerate}

\end{proof}

Theorem \ref{Thm:EValSym} reveals that symmetries defined by the matrices $J$ or $\Sigma$ are reflected in connections between left and right eigenvectors of the considered matrix. The BSE matrices show a symmetry with respect to two scalar products (Theorem \ref{Thm:Form1} and \ref{Thm:Form2}). This double-structure leads to eigenvalues that show up not only in pairs but in quadruples if they have a real and an imaginary component. Additionally, it yields a connection between right eigenvectors, clarified in the following theorem.

\begin{theorem}
 Let $H\in\mathbb{C}^{2n\times 2n}$ be self-adjoint with respect to $\Sigma$ and skew-adjoint with respect to (a) the sesquilinear scalar product or (b) the bilinear scalar product induced by $J$. Then
 \begin{enumerate}
  \item The eigenvalues of $H$ come in pairs $(\lambda,-\lambda)$ if $\lambda\in\mathbb{R}$ or $\lambda\in\iu\mathbb{R}$, or in quadruples $(\lambda,\bar{\lambda},-\lambda,-\bar{\lambda})$.
  \item 
  \begin{enumerate}
  \item If $v$ is an eigenvector of $H$ with respect to $\lambda$, then $J\Sigma v$ is an eigenvector of $H$ with respect to $-\lambda$. 
  \item If $v$ is an eigenvector of $H$ with respect to $\lambda$, then $J\Sigma \bar{v}$ is an eigenvector of $H$ with respect to $-\bar{\lambda}$. 
  \end{enumerate}
 \end{enumerate}
\end{theorem}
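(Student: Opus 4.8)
The plan is to reduce everything to Theorem~\ref{Thm:EValSym} together with a few elementary block identities, namely $J^{-1}=-J$, $\Sigma^{-1}=\Sigma$, $\Sigma J=-J\Sigma$, and $(J\Sigma)^2=(\Sigma J)^2=I$, all verified by a direct $2\times 2$ block computation.

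For part~1 I would simply intersect the two symmetry constraints furnished by Theorem~\ref{Thm:EValSym}. Being $\Sigma$-self-adjoint, $H$ has (by item~3) a spectrum invariant under $\lambda\mapsto\bar\lambda$. Being $J$-skew-adjoint, $H$ has a spectrum invariant under $\lambda\mapsto-\bar\lambda$ in case~(a) (item~1) or under $\lambda\mapsto-\lambda$ in case~(b) (item~2). In both cases the spectrum is therefore closed under the four-element group generated by $\lambda\mapsto\bar\lambda$ and $\lambda\mapsto-\lambda$, so the orbit of an eigenvalue $\lambda$ is contained in $\{\lambda,\bar\lambda,-\lambda,-\bar\lambda\}$. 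This orbit has four distinct elements unless $\lambda=\bar\lambda$ (equivalently $\lambda\in\mathbb{R}$) or $\lambda=-\bar\lambda$ (equivalently $\lambda\in\iu\mathbb{R}$), in which case it collapses to $\{\lambda,-\lambda\}$, which is exactly the claim. Matching of multiplicities then follows from the invertible eigenvector correspondences established in part~2.

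For part~2 the idea is to fuse the two defining matrix identities into a single (anti)commutation relation. $\Sigma$-self-adjointness reads $H^{\herm}=\Sigma H\Sigma$ in both cases. In case~(a), sesquilinear $J$-skew-adjointness gives $H^{\herm}=JHJ$, hence $\Sigma H\Sigma=JHJ$, i.e. $H=(\Sigma J)H(J\Sigma)$; multiplying on the right by $J\Sigma$ and using $(J\Sigma)^2=I$, $\Sigma J=-J\Sigma$ shows $H(J\Sigma)=-(J\Sigma)H$. Thus $Hv=\lambda v$ implies $H(J\Sigma v)=-(J\Sigma)Hv=-\lambda(J\Sigma v)$, and $J\Sigma v\neq 0$ because $J\Sigma$ is nonsingular; this is~2(a). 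In case~(b), bilinear $J$-skew-adjointness naturally involves the ordinary transpose, $H^{\tran}=JHJ$; conjugating and using that $J$ is real gives $H^{\herm}=\overline{H^{\tran}}=J\bar H J$, so now $\Sigma H\Sigma=J\bar H J$, i.e. $H=(\Sigma J)\bar H(J\Sigma)$. Applying this to $J\Sigma\bar v$, using $(J\Sigma)^2=I$, $\Sigma J=-J\Sigma$ and the conjugated eigenrelation $\bar H\bar v=\bar\lambda\bar v$, yields $H(J\Sigma\bar v)=(\Sigma J)\bar H\bar v=\bar\lambda(\Sigma J)\bar v=-\bar\lambda(J\Sigma\bar v)$, which is~2(b).

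The computations themselves are short; the one genuine subtlety is case~(b): since $\Sigma$ induces a sesquilinear form while $J$ induces a bilinear one, the two symmetries cannot be merged into a relation involving $H$ alone, and one is forced onto $\bar H$. This is precisely why the eigenvector map in 2(b) involves $\bar v$ and reflects the eigenvalue to $-\bar\lambda$ rather than to $-\lambda$. The remaining care is purely bookkeeping: tracking the signs in $J^{-1}=-J$, $\Sigma J=-J\Sigma$ and $(J\Sigma)^2=I$.
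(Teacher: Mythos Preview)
Your proof is correct and follows essentially the same route as the paper. For part~1 you invoke Theorem~\ref{Thm:EValSym} in exactly the way the paper does; for part~2 your merged identities $H=(\Sigma J)H(J\Sigma)$ in case~(a) and $H=(\Sigma J)\bar H(J\Sigma)$ in case~(b) are precisely the paper's identities $\Sigma J H J\Sigma = H$ and $\Sigma J H J\Sigma = \bar H$, and the eigenvector conclusions are drawn in the same way. The only difference is expository: you spell out the auxiliary relations $\Sigma J=-J\Sigma$ and $(J\Sigma)^2=I$ explicitly, whereas the paper absorbs them into a one-line equivalence.
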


\begin{proof}
 \begin{enumerate}
  \item The quadruple property comes from combining the propositions given in Theorem \ref{Thm:EValSym} (1. and 3. or 2. and 3., respectively). The pair property for real eigenvalues comes (a) from Theorem \ref{Thm:EValSym}, proposition 1, or (b) from Theorem \ref{Thm:EValSym}, proposition 2. The pair property for imaginary eigenvalues follows from Theorem \ref{Thm:EValSym}, proposition 3 in both cases.
  \item 
  \begin{enumerate}
  \item  With $\Sigma J H J \Sigma = H$ we have
  \begin{align*}
   Hv = \lambda v \quad\Leftrightarrow\quad HJ\Sigma v = -\lambda J\Sigma v.
  \end{align*} 
  \item  With $\Sigma J H J \Sigma = \bar{H}$ we have
  \begin{align*}
   Hv = \lambda v \quad \Leftrightarrow\quad  \bar{H}J\Sigma v = -\lambda J\Sigma v\quad \Leftrightarrow \quad HJ\Sigma \bar{v} = -\bar{\lambda}J\Sigma\bar{v}.
  \end{align*}
  \end{enumerate}
 \end{enumerate}

\end{proof}

 The special case of (b) (i.e. for BSE matrices of form II) has been proven in \cite{BenFY18}. Our proof does not rely on the particular block structure of the matrix, but works with the given symmetries and is therefore more concise and easily extendable to other double-structured matrices. 

 In the practice of computing excitation properties of materials, there is even more structure available. It can be exploited for devising efficient algorithms. The Hermitian matrices 
 \begin{align}\label{Eq:BSEHam}
 \mathcal{H}_1 := \Sigma H_1 = \begin{bmatrix}
                A & B \\
                B & A
               \end{bmatrix},\qquad
  \mathcal{H}_2 :=\Sigma H_2 = \begin{bmatrix}
                A & B \\
                B^{\herm} & A^{\tran}
               \end{bmatrix},
 \end{align}
 introduced in \eqref{Eq:BSEPencil}, are called BSE Hamiltonians. They are typically positive definite \cite{OniRR02,SanMK14}.  The term ``Hamiltonian'' might cause confusion in this context, as it has a different meaning in numerical linear algebra and in electronic structure theory. We have used the numerical linear algebra meaning, by which a matrix $H$ is called Hamiltonian if it holds $JH=-JH^{\herm}$. In electronic structure theory, the term ``Hamiltonian'' is inspired by the Hamiltonian operator from basic quantum mechanics. It refers to the left matrix $\mathcal{H}$ of a generalized, Schr\"odinger-like, eigenvalue problem, such as \eqref{Eq:BSEPencil}, which is typically Hermitian. 


The definiteness property has consequences for the structure of the eigenvalue spectrum. To study these, we consider the more general class of $\Sigma$-Hermitian matrices, by which we mean matrices that are self-adjoint with respect to the scalar product induced by a signature matrix $\Sigma$. 

\begin{theorem}\label{Thm:DefiniteRealEvals}
Let $\Sigma = \diag{\sigma_1,\dots,\sigma_n}$, $\sigma_i\in\{+1,-1\}$ be a signature matrix with $p$ positive and $n-p$ negative diagonal entries. Let $H\in\mathbb{C}^{n\times n}$ be given such that $\Sigma H$ is Hermitian positive definite. Then $H$ is diagonalizable and its eigenvalues are real, of which $p$ are positive and $n-p$ are negative.
\end{theorem}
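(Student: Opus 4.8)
The plan is to reduce the claim to a standard fact about congruence and the spectra of products of Hermitian matrices. First I would set $G := \Sigma H$, which by hypothesis is Hermitian positive definite. Then $H = \Sigma G$ (using $\Sigma^{-1}=\Sigma$), so $H$ is the product of the Hermitian matrix $\Sigma$ (with inertia $(p, n-p, 0)$) and the Hermitian positive definite matrix $G$. Since $G$ is HPD, it has an HPD square root $G^{1/2}$, and I would consider the similarity transformation
\begin{align*}
 G^{-1/2} H G^{1/2} = G^{-1/2}\Sigma G G^{1/2} = G^{1/2}\Sigma G^{1/2}.
\end{align*}
The matrix $S := G^{1/2}\Sigma G^{1/2}$ is Hermitian, hence diagonalizable with real eigenvalues; and it is similar to $H$, so $H$ is diagonalizable with real eigenvalues as well. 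This already gives diagonalizability and reality.

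For the inertia count, I would invoke Sylvester's law of inertia: $S = G^{1/2}\Sigma G^{1/2}$ is a congruence transformation of $\Sigma$ (with congruence matrix $G^{1/2}$, which is nonsingular since $G$ is), so $S$ and $\Sigma$ have the same inertia, namely $p$ positive and $n-p$ negative eigenvalues and none zero. Because the eigenvalues of $S$ coincide with those of $H$ (similarity preserves the spectrum, with multiplicities), $H$ has exactly $p$ positive and $n-p$ negative eigenvalues, all real, which is the assertion. Diagonalizability follows since $S$ is Hermitian with no repeated structural obstruction — even if eigenvalues repeat, a Hermitian matrix is always diagonalizable, and similarity carries this over to $H$.

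I do not expect a serious obstacle here; the only points requiring minor care are (i) noting that $\Sigma^{-1}=\Sigma$ so that $H=\Sigma G$ genuinely holds, and (ii) being explicit that the similarity $H \sim S$ preserves eigenvalues \emph{with algebraic multiplicity}, so that the inertia transfer is legitimate. One could alternatively avoid square roots entirely and argue spectrally: if $Hx=\lambda x$ with $x\neq 0$, then $\Sigma G x = \lambda x$, so $G x = \lambda \Sigma x$, and taking the inner product with $x$ gives $x^{\herm} G x = \lambda\, x^{\herm}\Sigma x$; since $x^{\herm}Gx>0$, $\lambda$ is real and nonzero and $\operatorname{sign}(\lambda)=\operatorname{sign}(x^{\herm}\Sigma x)$. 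This pencil viewpoint $(G,\Sigma)$ also makes the inertia statement natural, but the congruence argument above is the cleanest route to the exact count $p$ versus $n-p$.
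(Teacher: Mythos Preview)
Your argument is sound, but there is a small slip in the displayed similarity: as written, $G^{-1/2}HG^{1/2}=G^{-1/2}\Sigma G\,G^{1/2}=G^{-1/2}\Sigma G^{3/2}$, which is \emph{not} $G^{1/2}\Sigma G^{1/2}$ unless $\Sigma$ and $G$ commute. The fix is simply to conjugate in the other direction: $G^{1/2}HG^{-1/2}=G^{1/2}\Sigma G\,G^{-1/2}=G^{1/2}\Sigma G^{1/2}=:S$, which is Hermitian and congruent to $\Sigma$. With that correction, everything you wrote goes through: $H$ is similar to the Hermitian matrix $S$, hence diagonalizable with real spectrum, and Sylvester's law of inertia applied to $S=G^{1/2}\Sigma G^{1/2}$ gives the count $p$ positive and $n-p$ negative.

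The paper takes a different but closely related route. Instead of the square root of $G=\Sigma H$, it invokes the simultaneous diagonalization of the Hermitian pair $(\Sigma H,\Sigma)$ (with $\Sigma H$ positive definite): there is a nonsingular $X$ with $X^{\herm}\Sigma H X=I_n$ and $X^{\herm}\Sigma X=\Lambda$ diagonal, from which $X^{-1}HX=\Lambda^{-1}$ follows, and Sylvester applied to $X^{\herm}\Sigma X=\Lambda$ yields the inertia. Your approach is arguably more self-contained, needing only the existence of an HPD square root rather than the simultaneous-diagonalization lemma, and it makes the similarity to a Hermitian matrix explicit in one line. The paper's pencil formulation, on the other hand, ties in more directly with the generalized eigenvalue problem $\Sigma x=\lambda\,\Sigma H x$ and foreshadows the algorithmic viewpoint used later. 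Both arguments ultimately rest on the same two ingredients: a congruence of $\Sigma$ by a factor built from $\Sigma H$, and Sylvester's law.
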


\begin{proof}
  As $\Sigma H$ is positive definite, and $\Sigma$ is symmetric, they can be diagonalized simultaneously (see \cite{GolV13}, Corollary 8.7.2), i.e. there is a nonsingular $X\in\mathbb{C}^{n\times n}$, s.t
 \begin{align}
  X^{\herm}\Sigma H X &= I_n, \label{Eq:SimDiag1}\\
  X^{\herm} \Sigma X &= \Lambda \in\mathbb{R}^{n\times n}, \label{Eq:SimDiag2}
 \end{align}
 where $\Lambda=\diag{\lambda_1,\dots,\lambda_n}$ gives the eigenvalues of the matrix pencil $\Sigma x - \lambda \Sigma H$.  It follows from \eqref{Eq:SimDiag2} and Sylvester's law of inertia that $\Lambda$ has $p$ positve and $n-p$ negative values. We have
 \begin{align}
  X^{-1}HX= \Lambda^{-1},
 \end{align}
 i.e. $H$ is diagonalizable and $\Lambda^{-1}$ contains the eigenvalues of $H$. 
\end{proof}

The spectral structure of the BSE matrices given in practice follows immediately from the presented theorems and is summarized in the following lemma.

\begin{lemma}\label{Lem:ComputeNegEvecs}
 Let $H$ be a BSE matrix of form I (see \eqref{Eq:Hform1}) or form II (see \eqref{Eq:Hform2}), such that the BSE Hamiltonian \eqref{Eq:BSEHam} is positive definite. Then the eigenvalues are real and come in pairs $\pm\lambda$. If $v$ is an eigenvector associated with $\lambda$, then
 \begin{enumerate}
  \item $y=\begin{bmatrix} 0 & I \\                                                                                                                                                                                                                                                                                 I & 0                                                                                                                                                                                                                                                                                \end{bmatrix}v$ is an eigenvector associated with $-\lambda$ if $H$ is of BSE form I,
  \item $y=\begin{bmatrix} 0 & I \\                                                                                                                                                                                                                                                                                 I & 0                                                                                                                                                                                                                                                                                \end{bmatrix}\bar{v}$ is an eigenvector associated with $-\lambda$ if $H$ is of BSE form II.
 \end{enumerate}

\end{lemma}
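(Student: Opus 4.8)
The plan is to obtain the lemma as a corollary of the results already in hand, with only one small block-matrix identity left to check. First I would note that the hypothesis ``the BSE Hamiltonian \eqref{Eq:BSEHam} is positive definite'' says precisely that $\Sigma H$ is Hermitian positive definite: by Theorems~\ref{Thm:Form1} and~\ref{Thm:Form2} the matrix $\Sigma H$ is Hermitian for both forms, and $\Sigma H$ is exactly $\mathcal{H}_1$ resp.\ $\mathcal{H}_2$. Hence Theorem~\ref{Thm:DefiniteRealEvals} applies with the signature matrix $\Sigma = \Sigma_n$, which has $p=n$ positive and $n$ negative diagonal entries. It yields at once that $H$ is diagonalizable with real spectrum, $n$ eigenvalues positive and $n$ negative.

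Next, for the $\pm\lambda$ pairing and the eigenvector relations I would invoke the (unlabeled) double-structure theorem stated just before Theorem~\ref{Thm:DefiniteRealEvals}, applied in case (a) if $H$ has BSE form~I and in case (b) if $H$ has BSE form~II (its hypotheses, $\Sigma$-self-adjointness and $J$-skew-adjointness, are guaranteed by Theorems~\ref{Thm:Form1} and~\ref{Thm:Form2}). Since we have just shown every eigenvalue of $H$ is real, part~1 of that theorem gives the pairing $\pm\lambda$, and part~2 gives that $J\Sigma v$ is an eigenvector for $-\lambda$ in the sesquilinear (form~I) case, while $J\Sigma\bar v$ is an eigenvector for $-\bar\lambda$ in the bilinear (form~II) case; reality of $\lambda$ collapses $-\bar\lambda$ to $-\lambda$, which is where that hypothesis is used in the second case.

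Finally, the only computation is to rewrite $J\Sigma$ in block form,
\[
 J_n \Sigma_n = \begin{bmatrix} 0 & I_n \\ -I_n & 0 \end{bmatrix}\begin{bmatrix} I_n & 0 \\ 0 & -I_n \end{bmatrix} = \begin{bmatrix} 0 & -I_n \\ -I_n & 0 \end{bmatrix} = -\begin{bmatrix} 0 & I_n \\ I_n & 0 \end{bmatrix},
\]
so that $\begin{bmatrix} 0 & I \\ I & 0 \end{bmatrix}v = -J\Sigma v$ and $\begin{bmatrix} 0 & I \\ I & 0 \end{bmatrix}\bar v = -J\Sigma\bar v$. Scaling an eigenvector by $-1$ does not change the associated eigenvalue, so the vectors $y$ displayed in the lemma are eigenvectors of $H$ for $-\lambda$ in cases~1 and~2, respectively. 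I do not expect a genuine obstacle here: the lemma is essentially a bookkeeping consequence of the earlier theorems, and the only points requiring care are matching the positive-definiteness and signature hypotheses of Theorem~\ref{Thm:DefiniteRealEvals} and tracking the complex conjugation in the form~II branch.
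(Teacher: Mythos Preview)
Your proposal is correct and matches the paper's own treatment: the paper does not give a separate proof but states that the lemma ``follows immediately from the presented theorems,'' which is exactly what you do by combining Theorem~\ref{Thm:DefiniteRealEvals} with the double-structure theorem and then reading off $J\Sigma$ in block form. Your explicit computation of $J_n\Sigma_n=-\begin{bmatrix}0&I\\I&0\end{bmatrix}$ and the remark that the sign is absorbed by scaling are the only details the paper leaves implicit.
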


In the remaining part of this section we focus on BSE matrices of form I, paving the way for new, efficient algorithms. Two essential observations to make the problem more tractable are the following, which can e.g. be found in \cite{ShaDYetal16}, albeit for real matrices.

\begin{lemma}\label{Lem:Qtransform}
 Let  $H$ be a BSE matrix of form I \eqref{Eq:Hform1}. 
 \begin{enumerate}
  \item With the matrix $Q=\frac{1}{2}\begin{bmatrix}
          I & I\\
          -I & I
         \end{bmatrix}$ we have 
\begin{align}\label{Eq:TransMatrix1}
Q^{-1} H Q = \begin{bmatrix}
           0 &A-B\\
           A +B & 0
          \end{bmatrix}.
\end{align}
\item $\Sigma H$ is positive definite  if and only if $A+B$ and $A-B$ are positive definite.
 \end{enumerate}
\end{lemma}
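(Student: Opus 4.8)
The two parts are established by direct block computation, the only non-trivial ingredient being Sylvester's law of inertia.

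For part~1, the plan is to first record the explicit inverse
\begin{align*}
 Q^{-1} = \begin{bmatrix} I & -I \\ I & I \end{bmatrix},
\end{align*}
which one verifies immediately from $QQ^{-1}=I_{2n}$. Substituting the block form of $H$ from \eqref{Eq:Hform1} and carrying out the two block products $Q^{-1}H$ and $(Q^{-1}H)Q$ then produces the block-antidiagonal matrix on the right-hand side of \eqref{Eq:TransMatrix1}, with $A+B$ and $A-B$ occupying the two off-diagonal blocks. No use of the Hermitian symmetry of $A$ or $B$ is needed here; it is pure bookkeeping.

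For part~2, I would first note that $\Sigma H = \mathcal{H}_1 = \begin{bmatrix} A & B \\ B & A \end{bmatrix}$, which is Hermitian precisely because $A=A^{\herm}$ and $B=B^{\herm}$; the same symmetry makes $A+B$ and $A-B$ Hermitian, so all the definiteness statements in question are meaningful. I would then block-diagonalize $\mathcal{H}_1$ by a unitary congruence: with the Hermitian unitary matrix $P=\tfrac{1}{\sqrt 2}\begin{bmatrix} I & I \\ I & -I \end{bmatrix}$ a short computation yields
\begin{align*}
 P^{\herm}\mathcal{H}_1 P = \begin{bmatrix} A+B & 0 \\ 0 & A-B \end{bmatrix}.
\end{align*}
Since $P$ is nonsingular, $\mathcal{H}_1$ and this block-diagonal matrix are congruent and hence have the same inertia (Sylvester's law of inertia, cf.\ \cite{GolV13}); in particular $\mathcal{H}_1$ is positive definite if and only if both diagonal blocks are. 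Alternatively one can recycle part~1: from the short computation $Q^{\herm}\Sigma Q=\tfrac12\begin{bmatrix} 0 & I \\ I & 0 \end{bmatrix}$ one obtains $Q^{\herm}(\Sigma H)Q=(Q^{\herm}\Sigma Q)(Q^{-1}HQ)$, which by part~1 is block-diagonal with diagonal blocks proportional to $A+B$ and $A-B$, and Sylvester's law applies just as before.

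I do not expect a genuine obstacle: the argument reduces to multiplying $2\times 2$ block matrices correctly and invoking congruence-invariance of inertia. The only points needing care are the signs and block positions in $Q^{-1}$ (and in $Q^{\herm}\Sigma Q$ for the second route), and the remark above that the Hermitian symmetry of $A$ and $B$ is exactly what makes $\Sigma H$, and the blocks $A\pm B$, Hermitian, so that ``positive definite'' is the appropriate notion.
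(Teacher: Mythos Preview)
Your proposal is correct. The paper does not actually supply a proof of this lemma; it merely states the two observations and cites \cite{ShaDYetal16} as a source, so there is nothing to compare against. Your argument by direct block multiplication for part~1 and by a unitary congruence plus Sylvester's law of inertia for part~2 is exactly the standard way to verify these facts, and both of your routes for part~2 go through cleanly.

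One small remark: if you carry out the block multiplication in part~1 explicitly you will find $Q^{-1}HQ=\begin{bmatrix}0 & A+B\\ A-B & 0\end{bmatrix}$, i.e.\ the two off-diagonal blocks appear swapped relative to \eqref{Eq:TransMatrix1} as printed. This is a typo in the statement; the paper's own later use of the lemma (in the proof of Theorem~\ref{Thm:ProdEig}, where it writes $Q^{-1}HQ=\begin{bmatrix}0&M_1\\M_2&0\end{bmatrix}$ with $M_1=A+B$, $M_2=A-B$) is consistent with the computation, and in any case it makes no difference for part~2.
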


The following theorem plays the central role in this paper. We use it as a new tool to derive an existing solution approach. Within this new framework, other solution approaches will become apparent, yielding significant benefits compared to the existing approach.

\begin{theorem}\label{Thm:ProdEig}
 Let $H$ be a BSE matrix of form I (see \eqref{Eq:Hform1}) wih a positive definite BSE Hamiltonian \eqref{Eq:BSEHam}. Then  $M_1:=A+B$, $M_2=A-B$ are Hermitian positive definite. 
Let 
\begin{align}
 M_1 M_2 v_1 =  \mu v_1, \qquad
 v_2^{\herm} M_1 M_2 = \mu v_2^{\herm}, \qquad v_1^{\herm}v_2 = 1, \label{Eq:leftrightEV}
\end{align}
define a pair of right and left eigenvectors of the matrix product $M_1 M_2$. Then $\mu\in\mathbb{R}$, $\mu>0$. With $Q:=\frac{1}{2}\begin{bmatrix}
          I & I\\
          -I & I
         \end{bmatrix}$ and scaling factors $\lambda_1:={v_2^{\herm} M_1 v_2}>0$, $\lambda_2:={v_1^{\herm} M_2 v_1}>0$, an eigenpair of $H$ is given by
\begin{align}\label{Eq:GenScaling}
 \lambda= \sqrt{\mu},\qquad v_{\lambda} = Q \begin{bmatrix}
        v_1 \lambda_1^{\frac{1}{4}}\lambda_2^{-\frac{1}{4}}\\
        v_2 \lambda_1^{-\frac{1}{4}}\lambda_2^{\frac{1}{4}}
       \end{bmatrix},
\end{align}
 i.e. $Hv_{\lambda} = \lambda v_{\lambda}$. If $\gamma$ is another eigenvalue of $M_1M_2$ and $v_\gamma$ is the corresponding constructed vector, it holds
 \begin{align}\label{Eq:SigmaOrthEVecs}
  v_\lambda ^{\herm}\Sigma v_{\gamma}  = \begin{cases}
                                    1 & \text{if } \lambda=\gamma\\
                                    0 & \text{if } \lambda\neq\gamma.\\
                                   \end{cases}
 \end{align}

\end{theorem}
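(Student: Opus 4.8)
The plan is to verify the eigenpair claim by direct substitution, using the block structure of $Q^{-1}HQ$ from Lemma~\ref{Lem:Qtransform} together with the spectral properties of the product $M_1M_2$ of two Hermitian positive definite matrices. First I would record the basic facts about $M_1M_2$: since $M_1,M_2$ are Hermitian positive definite, $M_1M_2$ is similar to $M_1^{1/2}M_2M_1^{1/2}$ (Hermitian positive definite), so all its eigenvalues $\mu$ are real and positive, and left/right eigenvectors for distinct eigenvalues are orthogonal in the appropriate sense. From $M_1M_2v_1=\mu v_1$ I would also derive that $v_2$ can be taken as $M_2 v_1$ up to scaling (since $v_2^{\herm}M_1M_2 = \mu v_2^{\herm}$ means $M_2M_1 (M_1^{-1}v_2) = \mu (M_1^{-1}v_2)$, relating the two eigenvectors through $M_1$ and $M_2$). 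The positivity of the scaling factors $\lambda_1 = v_2^{\herm}M_1v_2$ and $\lambda_2 = v_1^{\herm}M_2v_1$ is then immediate from positive definiteness.

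Next I would work in the transformed coordinates. By Lemma~\ref{Lem:Qtransform}, $\widetilde H := Q^{-1}HQ = \begin{bmatrix} 0 & M_2 \\ M_1 & 0 \end{bmatrix}$. An eigenvector of $H$ of the form $v_\lambda = Q\begin{bmatrix} w_1 \\ w_2 \end{bmatrix}$ with eigenvalue $\lambda$ corresponds to $\widetilde H \begin{bmatrix} w_1 \\ w_2 \end{bmatrix} = \lambda \begin{bmatrix} w_1 \\ w_2 \end{bmatrix}$, i.e. $M_2 w_2 = \lambda w_1$ and $M_1 w_1 = \lambda w_2$. Combining these gives $M_2 M_1 w_1 = \lambda^2 w_1$ and $M_1 M_2 w_2 = \lambda^2 w_2$, so $\lambda = \sqrt{\mu}$ and $w_2$ is a right eigenvector of $M_1M_2$, $w_1$ a right eigenvector of $M_2M_1$ (equivalently a left eigenvector of $M_1M_2$, transposed). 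Thus $w_1 = c_1 v_1$, $w_2 = c_2 v_2$ for scalars $c_1, c_2$, and the two coupling equations $M_2 v_2 \, c_2 = \lambda v_1 c_1$ and $M_1 v_1 \, c_1 = \lambda v_2 c_2$ must be consistent; pairing the first with $v_1^{\herm}$ and the second with $v_2^{\herm}$ and using $v_1^{\herm}v_2 = 1$ yields $c_2/c_1 = \lambda/\lambda_2 = \lambda_1/\lambda$ (so indeed $\lambda^2 = \lambda_1\lambda_2 \cdot(\mu/\lambda_1\lambda_2)$... — more precisely one checks $\mu = \lambda_1\lambda_2$ need not hold, rather the ratio is pinned down), which is exactly solved by $c_1 = \lambda_1^{1/4}\lambda_2^{-1/4}$, $c_2 = \lambda_1^{-1/4}\lambda_2^{1/4}$ up to a common scalar. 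This reproduces \eqref{Eq:GenScaling}.

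Finally, for the $\Sigma$-orthogonality relation \eqref{Eq:SigmaOrthEVecs} I would compute $v_\lambda^{\herm}\Sigma v_\gamma$ using $Q^{\herm}\Sigma Q = \begin{bmatrix} 0 & \tfrac12 I \\ \tfrac12 I & 0 \end{bmatrix}$ (a short direct calculation with the given $Q$), so that
\begin{align*}
 v_\lambda^{\herm}\Sigma v_\gamma = \begin{bmatrix} c_1^\lambda v_1^\lambda \\ c_2^\lambda v_2^\lambda \end{bmatrix}^{\herm} Q^{\herm}\Sigma Q \begin{bmatrix} c_1^\gamma v_1^\gamma \\ c_2^\gamma v_2^\gamma \end{bmatrix} = \tfrac12\big( \overline{c_1^\lambda} c_2^\gamma (v_1^\lambda)^{\herm} v_2^\gamma + \overline{c_2^\lambda} c_1^\gamma (v_2^\lambda)^{\herm} v_1^\gamma \big).
\end{align*}
For $\lambda\neq\gamma$ (hence $\mu\neq\gamma^2$ as distinct eigenvalues of $M_1M_2$), biorthogonality of left and right eigenvectors of $M_1M_2$ forces $(v_2^\lambda)^{\herm}v_1^\gamma = 0$ and $(v_1^\lambda)^{\herm}v_2^\gamma = 0$, giving $0$. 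For $\lambda = \gamma$ the normalization $v_1^{\herm}v_2 = 1$ and the chosen scalings give $\tfrac12(c_1 \overline{c_2} + \overline{c_1}c_2) = \tfrac12(\lambda_1^{1/4}\lambda_2^{-1/4}\cdot\lambda_1^{-1/4}\lambda_2^{1/4} + \text{conj}) = 1$. The main obstacle I anticipate is bookkeeping the exact normalization: one must be careful that $v_2$ in the left-eigenvector equation is the $M_1$-adjoint-type partner of $v_1$ and not independently normalized, so that the ratio $c_2/c_1$ is genuinely forced (and real positive) rather than free — getting the quarter-powers of $\lambda_1,\lambda_2$ exactly right, and checking they are consistent with both coupling equations simultaneously, is the delicate point; everything else is routine block matrix algebra.
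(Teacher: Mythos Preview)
Your overall strategy coincides with the paper's: pass to $Q^{-1}HQ$, use the cross-relations between $v_1,v_2$ coming from positive definiteness of $M_1,M_2$, solve for the scaling constants, and verify $\Sigma$-orthogonality via $Q^{\herm}\Sigma Q=\tfrac12\bigl[\begin{smallmatrix}0&I\\I&0\end{smallmatrix}\bigr]$. Your argument for $\mu>0$ via similarity to $M_1^{1/2}M_2M_1^{1/2}$ is a clean alternative to the paper's route through Theorem~\ref{Thm:DefiniteRealEvals}.

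There are, however, two concrete errors that would make the computation fail as written. First, the blocks are swapped: a direct computation (and the paper's own proof) gives $Q^{-1}HQ=\bigl[\begin{smallmatrix}0&M_1\\M_2&0\end{smallmatrix}\bigr]$, not $\bigl[\begin{smallmatrix}0&M_2\\M_1&0\end{smallmatrix}\bigr]$ (the display in Lemma~\ref{Lem:Qtransform} contains a typo). The correct coupling equations are therefore $M_1v_2\,c_2=\lambda v_1 c_1$ and $M_2v_1\,c_1=\lambda v_2 c_2$; the relations you wrote, $M_2v_2\propto v_1$ and $M_1v_1\propto v_2$, do \emph{not} follow from the left/right eigenvector structure of $M_1M_2$ and cannot be established. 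To extract the scalars you must then pair with $v_2^{\herm}$ and $v_1^{\herm}$ respectively (since only $v_1^{\herm}v_2=1$ is known, not $\|v_i\|$), obtaining $\lambda_1c_2=\lambda c_1$ and $\lambda_2c_1=\lambda c_2$. Second, your parenthetical that ``$\mu=\lambda_1\lambda_2$ need not hold'' is wrong and is in fact the linchpin: from $M_1v_2=\lambda_1 v_1$ and $M_2v_1=\lambda_2 v_2$ one gets $M_1M_2v_1=\lambda_1\lambda_2 v_1$, hence $\mu=\lambda_1\lambda_2$; this identity is precisely what makes the two scalar conditions compatible at $\lambda=\sqrt{\mu}$ and forces $c_1/c_2=(\lambda_1/\lambda_2)^{1/2}$. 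With these corrections your argument becomes the paper's.
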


\begin{proof}
We observe $(Q^{-1}HQ)^2 = \diag{M_1M_2,M_2M_1}$. Therefore the eigenvalues of $M_1M_2$ must be a subset of the eigenvalues of $H^2$. These are positive real according to Theorem \ref{Thm:DefiniteRealEvals}. Note that $v_2$ being a left eigenvector of $M_1M_2$ is equivalent to $v_2$ being a right eigenvector of $M_2M_1$, because $M_1$ and $M_2$ are Hermitian. It follows from \eqref{Eq:leftrightEV} that
\begin{align*}
 M_1M_2M_1 v_2 =\mu M_1 v_2,
\end{align*}
i.e. $M_1v_2$ lies in the eigenspace of $M_1M_2$ corresponding to $\mu$. 
If $M_1v_2$ is not a multiple of $v_1$ it must hold $(M_1v_2)^{\herm{}}v_2 = 0$, which contradicts the fact, that $M_1$ is positive definite. So there is $\lambda_1\in\mathbb{C}$, s.t.
\begin{align}\label{Eq:crossEV1}
 M_1v_2 = \lambda_1 v_1.
\end{align}
Similarly it follows from \eqref{Eq:leftrightEV} that there is $\lambda_2\in\mathbb{C}$, s.t.
\begin{align}\label{Eq:crossEV2}
 M_2v_1 = \lambda_2 v_2.
\end{align}
Following from \eqref{Eq:crossEV1} and \eqref{Eq:crossEV2} and using $v_1^{\herm}v_2=1$, $\lambda_1$ and $\lambda_2$ can be computed as
\begin{align}
 \lambda_1 = v_2^{\herm} M_1 v_2,\qquad \lambda_2=v_1^{\herm} M_2 v_1.
\end{align}
It follows that $\lambda_1,\lambda_2\in\mathbb{R}_+$ because $M_1$ and $M_2$ are positive definite. Inserting \eqref{Eq:crossEV2} in \eqref{Eq:crossEV1} we get
\begin{align}
 M_1M_2v_1 = \lambda_1\lambda_2 v_1.
\end{align}
With \eqref{Eq:leftrightEV} it follows $\lambda_1\lambda_2=\mu$. For $\lambda \in\mathbb{C}$, 
\begin{align}
 \begin{bmatrix} 0 & M_1 \\
         M_2 &0
        \end{bmatrix} \begin{bmatrix}x\\y\end{bmatrix}
        = \lambda \begin{bmatrix}x\\y\end{bmatrix}
\end{align}
holds if and only if
\begin{align}
 M_1 y = \lambda x\quad \text{and} \quad M_2 x = \lambda y.
\end{align}
This is achieved by $\lambda:=\sqrt{\lambda_1\lambda_2}=\sqrt{\mu}$ and $x:=v_1 \lambda_1^{\frac{1}{4}}\lambda_2^{-\frac{1}{4}}$ and $y:=v_2\lambda_1^{-\frac{1}{4}}\lambda_2^{\frac{1}{4}}$, following from \eqref{Eq:crossEV1} and \eqref{Eq:crossEV2}.

In conclusion we have
 \begin{align*}
  Hv_{\lambda} = HQ\begin{bmatrix}
         x\\y
        \end{bmatrix}=
 Q\begin{bmatrix} 0 & M_1 \\
         M_2 &0
        \end{bmatrix}\begin{bmatrix}
        x\\y
       \end{bmatrix} = \lambda Q\begin{bmatrix}x\\y\end{bmatrix} = \lambda v_\lambda.
 \end{align*}
The $\Sigma$-orthogonality condition \eqref{Eq:SigmaOrthEVecs} remains to be shown. We observe $Q^{\herm}\Sigma Q = \frac{1}{2}\begin{bmatrix}
                                                                                                                          0 & I\\
                                                                                                                          I & 0\end{bmatrix}$ and with 
\begin{align}
v_\lambda =Q \begin{bmatrix}x_\lambda\\y_\lambda\end{bmatrix},\quad 
x_\lambda=v_{1,\lambda}\lambda_{1}^{\frac{1}{4}}\lambda_{2}^{-\frac{1}{4}},\quad 
y_\lambda=v_{2,\lambda}\lambda_{1}^{-\frac{1}{4}}\lambda_{2}^{\frac{1}{4}},\\
v_\gamma = Q\begin{bmatrix}x_\gamma\\y_\gamma\end{bmatrix},\quad 
x_\gamma=v_{1,\gamma}\gamma_{1}^{\frac{1}{4}}\gamma_{2}^{-\frac{1}{4}},\quad 
y_\gamma=v_{2,\gamma}\gamma_{1}^{-\frac{1}{4}}\gamma_{2}^{\frac{1}{4}},
\end{align}
we see
\begin{align}\label{Eq:2VecSigmaOrth}
 v_\lambda^{\herm} \Sigma v_\gamma = \frac{1}{2} 
 (v_{2,\lambda}^{\herm}v_{1,\gamma} \bar{\lambda}_1^{-\frac{1}{4}}\bar{\lambda}_2^{\frac{1}{4}}\gamma_1^{\frac{1}{4}}\gamma_2^{-\frac{1}{4}}
 +
 v_{1,\lambda}^{\herm}v_{2,\gamma} \bar{\lambda}_1^{\frac{1}{4}}\bar{\lambda}_2^{-\frac{1}{4}}\gamma_1^{-\frac{1}{4}}\gamma_2^{\frac{1}{4}}
 ).
\end{align}
This expression is equal to 0, if $v_{2,\lambda}$ and $v_{1,\gamma}$ are left and right eigenvectors corresponding to different eigenvalues $\lambda\neq \gamma$ of $M_1M_2$.  If $\lambda=\gamma$ and if $v_{\lambda}$ and $v_{\gamma}$ were constructed in the same way, we have $\lambda_1=\gamma_1$ and $\lambda_2=\gamma_2$. Using that $\lambda_1$ and $\lambda_2$ are real, \eqref{Eq:2VecSigmaOrth} simplifies to
\begin{align}
 v_\lambda^{\herm}\Sigma v_\lambda = \frac{1}{2}(v_{2,\lambda}^{\herm}v_{1,\lambda} + v_{1,\lambda}^{\herm}v_{2,\lambda}) = 1,
\end{align}
where we used the normalization of the vectors given in \eqref{Eq:leftrightEV}.
\end{proof}


\section{Algorithms for crystalline systems (Form I)}\label{Sec:Algs}
We have seen in Theorem \ref{Thm:ProdEig} that the Bethe-Salpeter eigenvalue problem of form I with size $2n\times 2n$ can be interpreted as a product eigenvalue problem with two Hermitan factors of size $n\times n$. In practice, the complete set of eigenvectors provides additional insight to excitonic effects. To compute them, left and right eigenvectors of the smaller product eigenvalue problem are needed. Product eigenvalue problems are well studied (see e.g. \cite{Kre05}). A general way to solve these problems, taking the product structure into account to improve numerical properties, is the periodic QR algorithm. This tool can be used for solving general Hamiltonian eigenvalue problems \cite{BenMX98a}. In this work we focus on non-iterative methods that work for Hermitian factors and transform the problem such that it can be treated using available HPC libraries.

The algorithms presented in this section compute the positive part of the spectrum of a BSE matrix and the corresponding eigenvectors. If the eigenvectors corresponding to the negative mirror eigenvalues are of interest, they can easily be computed by employing Lemma \ref{Lem:ComputeNegEvecs}.

\subsection{Square root approach}\label{Sec:Sqrt}
A widely used approach for solving the Bethe-Salpeter eigenvalue problem of form I (e.g. in \cite{SanMK14}) relies on the computation of the matrix square root of $M_2=A-B$. We present it in the following, relating it to the framework given by Theorem \ref{Thm:ProdEig}. 

Starting with the first equation of \eqref{Eq:leftrightEV}, we see
\begin{align}
 M_1 M_2 V_1 &= V_1 \Lambda^2\\
 \Leftrightarrow M_2^{\frac{1}{2}}M_1{M_2}^{\frac{1}{2}} \hat{V} &= \hat{V} \Lambda^2,\label{Eq:SqrtEvecs}
\end{align}
where $\hat{V} = M_2^{\frac{1}{2}}V_1$ contains the eigenvectors of the Hermitian matrix ${M_2}^{\frac{1}{2}}M_1{M_2}^{\frac{1}{2}}$.

On the other hand, with the second equation of \eqref{Eq:leftrightEV}, we see
\begin{align}
 V_2^{\herm} M_1 M_2 &= \Lambda^2 V_2^{\herm} \\
\Leftrightarrow \hat{V}^{\herm{}}{M_2}^{\frac{1}{2}} M_1 {M_2}^{\frac{1}{2}} &= \Lambda^2 \hat{V}^{\herm}
\end{align}
where $\hat{V}={M_2}^{-\frac{1}{2}} V_2 $ contains the left eigenvectors of ${M_2}^{\frac{1}{2}}M_1{M_2}^{\frac{1}{2}}$. Here, note that because ${M_2}^{\frac{1}{2}}M_1{M_2}^{\frac{1}{2}}$ is Hermitian, left and right eigenvectors coincide, we can denote both left and right eigenvector matrices by $\hat{V}$. 

If the generalized eigenvalue problem \eqref{Eq:leftrightEV} is solved in this particular way, we can say more about the resulting scalar factors $\lambda_1$ and $\lambda_2$ in \eqref{Eq:crossEV1} and  \eqref{Eq:crossEV2}.

\begin{lemma}\label{Lem:CrosScalarFactors}
Let $M_1$ and $M_2$ be given as in Theorem \ref{Thm:ProdEig}, $\Lambda^2$ be a diagonal matrix containing the eigenvalues of $M_1M_2$ and $\hat{V}$ contain the eigenvectors of ${M_2}^{\frac{1}{2}}M_1{M_2}^{\frac{1}{2}}$ and $\hat{V}^{\herm}\hat{V}=I$. Then $V_1:={M_2}^{-\frac{1}{2}}\hat{V}$, $V_2:={M_2}^{\frac{1}{2}}\hat{V}$ fulfill
\begin{align}
 V_1^{\herm}V_2 = I,\qquad
 M_2 V_1 =  V_2,  \qquad
 M_1 V_2 = V_1\Lambda^2.
\end{align}
\end{lemma}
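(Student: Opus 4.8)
The plan is to verify the three claimed identities directly by substituting the definitions $V_1 = M_2^{-1/2}\hat V$ and $V_2 = M_2^{1/2}\hat V$ and using the two facts we are handed: that $\hat V$ is unitary ($\hat V^{\herm}\hat V = I$) and that $\hat V$ diagonalizes the Hermitian matrix $M_2^{1/2}M_1 M_2^{1/2}$, i.e. $M_2^{1/2}M_1 M_2^{1/2}\hat V = \hat V \Lambda^2$. All three identities should fall out as routine manipulations, so the ``obstacle'' is essentially bookkeeping rather than any genuine difficulty; the one point requiring a word of justification is that $M_2^{1/2}$ is Hermitian (so that $(M_2^{1/2})^{\herm} = M_2^{1/2}$ and $(M_2^{-1/2})^{\herm} = M_2^{-1/2}$), which holds because $M_2$ is Hermitian positive definite (by Theorem~\ref{Thm:ProdEig}) and its principal square root is then the unique Hermitian positive definite square root.

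First I would treat $V_1^{\herm} V_2 = I$: compute
\begin{align*}
 V_1^{\herm} V_2 = (M_2^{-1/2}\hat V)^{\herm}(M_2^{1/2}\hat V) = \hat V^{\herm} M_2^{-1/2} M_2^{1/2}\hat V = \hat V^{\herm}\hat V = I,
\end{align*}
using that $M_2^{-1/2}$ is Hermitian and that $M_2^{-1/2}M_2^{1/2} = I$. Second, $M_2 V_1 = V_2$ is immediate from $M_2 V_1 = M_2 M_2^{-1/2}\hat V = M_2^{1/2}\hat V = V_2$. Third, for $M_1 V_2 = V_1\Lambda^2$ I would start from the eigenvector relation for the Hermitian matrix,
\begin{align*}
 M_2^{1/2} M_1 M_2^{1/2}\hat V = \hat V\Lambda^2,
\end{align*}
and left-multiply by $M_2^{-1/2}$ to obtain $M_1 M_2^{1/2}\hat V = M_2^{-1/2}\hat V\Lambda^2$, which is exactly $M_1 V_2 = V_1\Lambda^2$.

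To make the argument self-contained I would also recall, in one sentence, why the pair $(V_1, V_2)$ constructed this way is consistent with \eqref{Eq:leftrightEV}: since $\hat V$ is unitary and diagonalizes the Hermitian matrix $M_2^{1/2}M_1 M_2^{1/2}$, the columns of $V_1 = M_2^{-1/2}\hat V$ are right eigenvectors of $M_1 M_2$ and the columns of $V_2 = M_2^{1/2}\hat V$ are the corresponding left eigenvectors (equivalently right eigenvectors of $M_2 M_1$), with the normalization $V_1^{\herm}V_2 = I$ already established — so this lemma is simply the statement that the square-root construction realizes the abstract setup of Theorem~\ref{Thm:ProdEig} with explicit, computable factors, and in particular pins down the cross-relations \eqref{Eq:crossEV1}–\eqref{Eq:crossEV2} in the clean form $M_2 V_1 = V_2$, $M_1 V_2 = V_1\Lambda^2$ (i.e.\ the scaling factors are absorbed). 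No step here is expected to be a real obstacle; the only care needed is the Hermitian-square-root remark and keeping the order of factors straight.
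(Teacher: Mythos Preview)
Your proof is correct and follows essentially the same approach as the paper's: direct substitution of the definitions $V_1 = M_2^{-1/2}\hat V$, $V_2 = M_2^{1/2}\hat V$ together with the eigenvector relation $M_2^{1/2}M_1M_2^{1/2}\hat V = \hat V\Lambda^2$ and Hermitianness of $M_2^{1/2}$. If anything, your derivation of $M_2 V_1 = V_2$ is slightly more direct than the paper's (you use $M_2\cdot M_2^{-1/2}=M_2^{1/2}$ immediately, whereas the paper routes it back through the eigenvalue equation), but this is only a cosmetic difference.
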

\begin{proof}
The first statement is immediately obvious from the normalization of $\hat{V}$ and because $M_2$ is Hermitian.
Starting with \eqref{Eq:SqrtEvecs} we see 
\begin{align*}
 {M_2}^{\frac{1}{2}}M_1{M_2}^{\frac{1}{2}} \hat{V} = \hat{V} \Lambda^2
\Leftrightarrow M_1 V_2 = V_1 \Lambda^2.
\end{align*}
Using $M_1 V_2 = V_1 \Lambda^2$, \eqref{Eq:SqrtEvecs} also yields 
\begin{align*}
 {M_2}^{\frac{1}{2}}M_1{M_2}^{\frac{1}{2}} \hat{V} = \hat{V} \Lambda^2
 \Leftrightarrow 
 M_2V_1 = V_2.
\end{align*}

\end{proof}

Lemma \ref{Lem:CrosScalarFactors} states that the scaling factors in \eqref{Eq:crossEV1} and \eqref{Eq:crossEV2} are given by  $\lambda_1=\lambda^2$ and $\lambda_2=1$ in this case. Theorem \ref{Thm:ProdEig} therefore suggests to scale the acquired eigenvectors in order to obtain eigenvectors of the full matrix as $v := Q \begin{bmatrix}
        v_1 {\lambda}^{\frac{1}{2}}\\
        v_2{\lambda}^{-\frac{1}{2}}
       \end{bmatrix}$.

These observations suggest Algorithm \ref{Alg:H1sqrt}, where the left and right eigenvectors needed in Theorem \ref{Thm:ProdEig} are computed from the eigenvectors of the Hermitian matrix ${M_2}^{\frac{1}{2}}M_1{M_2}^{\frac{1}{2}}$.

\begin{algorithm}[ht]
\caption{Compute eigenvectors of BSE matrix of form I, using the matrix square root.\label{Alg:H1sqrt}}
\begin{algorithmic}[1]
\Require $A=A^{\herm}\in\mathbb{C}^{n\times n}$, $B=B^{\herm}\in\mathbb{C}^{n\times n}$, defining a BSE matrix of form I: $H=\begin{bmatrix}
                                                                                                                 A & B\\
                                                                                                                 -B & -A
                                                                                                                \end{bmatrix}$, s.t. $A+B$ and $A-B$ are positive definite.
\Ensure $V\in\mathbb{C}^{2n\times n}$, $\Lambda=\diag{\lambda_1,\dots,\lambda_n}\in\mathbb{R}^{n\times n}_+$ s.t. $HV=V\Lambda$.
\State $S \leftarrow {(A-B)}^{\frac{1}{2}}$ \label{Alg:H1sqrt:Sqrt}
\State Compute eigendecomposition of Hermitian positive definite matrix $M:=S(A+B)S$, i.e. 
\begin{align*}
 MV_M = V_M D,
\end{align*}
where $V_M$ contains the normalized eigenvectors of $M$ and $D=\diag{d_1,\dots,d_n}\in\mathbb{R}_+^{n\times n}$ the eigenvalues of $M$. \label{Alg:H1sqrt:Eig}
\State $\Lambda \leftarrow {D}^{\frac{1}{2}}$
\State $
 V_1 \leftarrow S^{-1} V_M {\Lambda}^{\frac{1}{2}}$ 
\State $
 V_2 \leftarrow S V_M {\Lambda}^{-\frac{1}{2}}$

\State $V\leftarrow\begin{bmatrix}
           \frac{1}{2}(V_1 + V_2)\\
           \frac{1}{2}(V_2 - V_1)
          \end{bmatrix}$.
\end{algorithmic}
\end{algorithm}

The essential work of this algorithm is the computation of the matrix square root (Step \ref{Alg:H1sqrt:Sqrt}) and the solution of a Hermitian eigenvalue problem (Step \ref{Alg:H1sqrt:Eig}). Computing the (principal) square root of a matrix is a nontrivial task (see e.g. \cite{Hig08}, Chapter 6) with a (perhaps surprisingly) high computational demand. Its efficient computation has been an active area of research. Given a Hermitian positive definite matrix $C$, its principal square root $S$, s.t.  $S^2=C$, can be computed by diagonalizing $M = V_C D_C V_C^{\herm}$, and taking the square roots of the diagonal entries of $D_C$. Then $S:=V_C\sqrt{D_C}V_C^{\herm}$. 


The main computational effort of Algorithm \ref{Alg:H1sqrt} is therefore the subsequent solution of two Hermitian eigenvalue problems.


\subsection{Cholesky factorization approach}\label{Sec:Chol}
We now lay out how the product eigenvalue problem given in \eqref{Eq:leftrightEV} can be solved by using Cholesky factorizations. Let the Cholesky factorization  $M_2 = L L^{\herm}$ be given.

Starting with \eqref{Eq:leftrightEV} we see
\begin{align}
 M_1 M_2 V_1 &= V_1 \Lambda^2\label{EVchol_start}\\
 \Leftrightarrow 
 L^{\herm} M_1 L \hat{V} &= \hat{V} \Lambda^2,
\end{align}
where $\hat{V} = L^{H}V_1$ contains the eigenvectors of the Hermitian matrix $L^{\herm} M_1 L$. 

On the other hand, with the second equation of \eqref{Eq:leftrightEV}, we have
\begin{align}
 V_2^{\herm} M_1 M_2 &= \Lambda^2 V_2^{\herm} \\
\Leftrightarrow \hat{V}^{\herm}L^{\herm} M_1 L &= \Lambda^2 \hat{V}^{\herm},
\end{align}
with $\hat{V}= L^{-1} V_2$ as the eigenvectors of $L^{\herm}M_1L$.


The analogy to Lemma \ref{Lem:CrosScalarFactors} is given in the following, which can be proven in the same way.

\begin{lemma}\label{Lem:CholScalarFactors}
Let $M_1$ and $M_2$ be given as in Theorem \ref{Thm:ProdEig}, $M_2=LL^{\herm}$ be a Cholesky decompositiong, $\Lambda^2$ be a diagonal matrix containing the eigenvalues of $M_1M_2$ and $\hat{V}$ contain eigenvectors of $L^{\herm}M_1L$ and $\hat{V}^{\herm}\hat{V}=I$. Then $V_2:=L\hat{V}$, $V_1:=V_2^{-H}=L^{-H}\hat{V}$ fulfill
\begin{align}
 V_1^{\herm}V_2 = I,\qquad
 M_2 V_1 =  V_2,  \qquad
 M_1 V_2 = V_1\Lambda^2.
\end{align}
\end{lemma}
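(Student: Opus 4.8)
The plan is to mimic the proof of Lemma~\ref{Lem:CrosScalarFactors} almost verbatim, replacing the symmetric square-root factorization $M_2 = M_2^{1/2} M_2^{1/2}$ by the Cholesky factorization $M_2 = LL^{\herm}$. First I would verify the normalization $V_1^{\herm}V_2 = I$: by definition $V_1 = L^{-\herm}\hat{V}$ and $V_2 = L\hat{V}$, so $V_1^{\herm}V_2 = \hat{V}^{\herm}L^{-1}L\hat{V} = \hat{V}^{\herm}\hat{V} = I$, which holds by the assumed orthonormality of $\hat V$. This is the same step as in Lemma~\ref{Lem:CrosScalarFactors}, with $L^{-1}$ playing the role that $M_2^{-1/2}M_2^{1/2}=I$ played there (here the cancellation is $L^{-1}L$ rather than a cancellation through the Hermitian square root).

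Next I would establish $M_1 V_2 = V_1\Lambda^2$. Starting from the eigenvalue relation for the Hermitian matrix $L^{\herm}M_1 L$, namely $L^{\herm}M_1 L\hat{V} = \hat{V}\Lambda^2$, I would multiply on the left by $L^{-\herm}$ to obtain $M_1 L\hat{V} = L^{-\herm}\hat{V}\Lambda^2$, i.e. $M_1 V_2 = V_1\Lambda^2$. Then, for $M_2 V_1 = V_2$, I would use the relation just derived: $M_2 V_1 = LL^{\herm}L^{-\herm}\hat{V}\Lambda^{-2}\cdot\Lambda^{2}$ — more directly, from $M_1 V_2 = V_1\Lambda^2$ we have $V_1 = M_1 V_2 \Lambda^{-2}$, and one checks $M_2 V_1 = M_2 M_1 V_2\Lambda^{-2}$. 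Using $V_2 = L\hat V$ and $L^{\herm}M_1 L\hat V = \hat V\Lambda^2$, one gets $M_2 M_1 V_2 = LL^{\herm}M_1 L\hat V = L\hat V\Lambda^2 = V_2\Lambda^2$, hence $M_2 V_1 = V_2\Lambda^2\Lambda^{-2} = V_2$. Alternatively, and more cleanly, substitute $V_1 = L^{-\herm}\hat V$ directly: $M_2 V_1 = LL^{\herm}L^{-\herm}\hat V = L\hat V = V_2$, which needs no eigen-relation at all.

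The only genuine content is that $L$ is invertible, which follows because $M_2 = A - B$ is Hermitian positive definite by Theorem~\ref{Thm:ProdEig} (so its Cholesky factor exists and has nonzero diagonal), and that $\hat V$ indeed diagonalizes $L^{\herm}M_1 L$ with the same eigenvalue matrix $\Lambda^2$ that appears for $M_1 M_2$ — this is exactly the similarity $M_1 M_2 = L^{-\herm}(L^{\herm}M_1 L)L^{\herm}$ computed in the displayed equations preceding the lemma. I expect no real obstacle: the argument is a routine transcription of Lemma~\ref{Lem:CrosScalarFactors}, and the paper explicitly says it ``can be proven in the same way.'' If anything needs care, it is only bookkeeping of Hermitian transposes on $L^{-1}$ versus $L^{-\herm}$, and confirming that the $\Lambda^2$ appearing in $L^{\herm}M_1 L\hat V = \hat V\Lambda^2$ is consistent with the one in \eqref{EVchol_start}, which is immediate from the stated similarity transformation.
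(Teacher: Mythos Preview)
Your proposal is correct and matches the paper's approach exactly: the paper does not give a separate proof but simply states that Lemma~\ref{Lem:CholScalarFactors} ``can be proven in the same way'' as Lemma~\ref{Lem:CrosScalarFactors}, and your transcription of that argument (with $L$ and $L^{\herm}$ replacing $M_2^{1/2}$) is precisely what is intended. Your direct verification $M_2 V_1 = LL^{\herm}L^{-\herm}\hat V = L\hat V = V_2$ is in fact slightly cleaner than the route the paper takes for the analogous step in Lemma~\ref{Lem:CrosScalarFactors}.
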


This suggests the same scaling factors as in Algorithm \ref{Alg:H1sqrt}, leading to Algorithm \ref{Alg:H1chol}. The same key idea is used in the standard approach for solving generalized symmetric-definite eigenvalue problems \cite{GolV13}, implemented in various libraries \cite{LAPACK,MarBJetal14}.

\begin{algorithm}[ht]
\caption{Compute eigenvectors of BSE matrix of form I, using a Cholesky factorization.\label{Alg:H1chol}}
\begin{algorithmic}[1]
\Require $A=A^{\herm}\in\mathbb{C}^{n\times n}$, $B=B^{\herm}\in\mathbb{C}^{n\times n}$, defining a BSE matrix of form I $H=\begin{bmatrix}
                                                                                                                 A & B\\
                                                                                                                 -B & -A
                                                                                                                \end{bmatrix}$, s.t. $A+B$ and $A-B$ are positive definite.
\Ensure $V\in\mathbb{C}^{2n\times n}$, $\Lambda=\diag{\lambda_1,\dots,\lambda_n}\in\mathbb{R}^{n\times n}_+$ s.t. $HV=V\Lambda$.
\State Compute Cholesky factorization $LL^{\herm} = A-B$.
\State Compute eigendecomposition for $M=L^{\herm} (A+B) L$ \label{Alg:H1chol:eig1}
\begin{align*}
 MV_M = V_{M} D
\end{align*}
\State $\Lambda \leftarrow {D}^{\frac{1}{2}}$
\State $V_1\leftarrow L^{-H}V_M{\Lambda}^{\frac{1}{2}}$
\State $V_2\leftarrow LV_M{\Lambda}^{-\frac{1}{2}}$

\State $V\leftarrow\begin{bmatrix}
           \frac{1}{2}(V_1 + V_2)\\
           \frac{1}{2}(V_2 - V_1)
          \end{bmatrix}$ .
\end{algorithmic}
\end{algorithm}

Comparing Algorithm \ref{Alg:H1sqrt} to \ref{Alg:H1chol}, we see that the essential work in both algorithms is solving Hermitian $n\times n$ eigenvalue problems. The Cholesky variant (Algorithm \ref{Alg:H1chol}) solves one explicitly at Step \ref{Alg:H1chol:eig1}. The square root variant (Algorithm \ref{Alg:H1sqrt}) solves one for computing the matrix square root, which is then used to set up the matrix for the second eigenvalue problem.  Then both left and right eigenvectors of the product eigenvalue problem can be inferred from the computed ones. 
%
%
%
%
%

\subsection{Singular Value Decomposition approach}\label{Sec:SVD}
Both the square root approach discussed in Section \ref{Sec:Sqrt} and the Cholesky approach discussed in Section \ref{Sec:Chol} compute the squared eigenvalues of the original problem. In the numerical linear algebra community this procedure is well known to limit the attainable accuracy  \cite{Van84b}. 

The methods essentially work on the (transformed) matrix product $M_1M_2$. It corresponds to the squared matrix $H^2$, as
\begin{align*}
Q^{\herm} H^2 Q = \begin{bmatrix}
              M_1M_2&\\
              &M_2M_1
             \end{bmatrix},\qquad \text{where } Q:=\frac{1}{\sqrt{2}}\begin{bmatrix}
          I & I\\
          -I & I
         \end{bmatrix}.
\end{align*}
See also Lemma \ref{Lem:Qtransform} and the proof of Theorem \ref{Thm:ProdEig}. Now the scaling factor of $Q$ is different to ensure its orthogonality. $H$ belongs to the class of Hamiltonian matrices (see Section \ref{Sec:Theory}).  When the eigenvalues are computed from the squared matrix $H^2$, employing a backward-stable method, the computational error can be approximated using first-order perturbation theory \cite{Wil65,Van84b,BenBB00}. It is given as 
\begin{align}\label{Eq:SquaredErr}
 |\lambda - \hat{\lambda} |  
 \approx {\epsilon} 
 \frac{\|H\|_2}{s(\lambda)} \min{\left\{\frac{\|H\|_2}{\lambda },\frac{1}{\sqrt{\epsilon}}\right\}},
\end{align} 
where $\lambda$ denotes an exact eigenvalue of $H$, $\hat{\lambda}$ the corresponding computed value, $s(\lambda)$ the condition number of the eigenvalue, and $\epsilon$ the machine precision. Unless $\lambda$ is very large, the expression is dominated by $\frac{\sqrt{\epsilon}\|H\|_2}{s(\lambda)}$. Essentially, the number of significant digits of the eigenvalues is halved, compared to direct backward-stable methods. For example, applying the QR algorithm on the original matrix $H$ would yield an approximate error of $\frac{\epsilon\|H\|_2}{s(\lambda)}$. It fails however, to preserve and exploit the structure of the problem and is undesirable from a numerical as well as from a performance point of view.


A remedy is given in the approach discussed in this section, making use of the singular value decomposition (SVD).

Given the Cholesy factorizations $L_1L_1^{\herm}=M_1$, $L_2L_2^{\herm}=M_2$ and the SVD $U\Lambda V^{\herm} = L_1L_2^{\herm}$, we observe that $\Lambda$ contains the eigenvalues of the BSE matrix, i.e. the square roots of the eigenvalues of the matrix product $M_1M_2$. The details of the eigenvector computation are given in the following Lemma.

\begin{lemma}\label{Lem:SVDScalarFactors}
Let $M_1$ and $M_2$ be given as in Theorem \ref{Thm:ProdEig}, $L_1L_1^{\herm}=M_1$, $L_2L_2^{\herm}=M_2$ be Cholesky factorizations, and $L_1^{\herm}L_2=U\Lambda V^{\herm}$ be a singular value decomposition. Then $V_1:=L_1U {\Lambda}^{-\frac{1}{2}}$, $V_2:=L_2 V {\Lambda}^{-\frac{1}{2}}$ fulfill
\begin{align}
 V_1^{\herm}V_2 = I,\qquad
 M_2 V_1 =  V_2\Lambda,  \qquad
 M_1 V_2 = V_1\Lambda.
\end{align}
\end{lemma}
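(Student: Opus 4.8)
The plan is to verify the three stated identities directly by substituting the definitions $V_1 := L_1 U \Lambda^{-1/2}$ and $V_2 := L_2 V \Lambda^{-1/2}$, and repeatedly using the defining relation $L_1^{\herm} L_2 = U \Lambda V^{\herm}$ of the SVD together with the unitarity $U^{\herm} U = V^{\herm} V = I$, the Cholesky identities $M_1 = L_1 L_1^{\herm}$, $M_2 = L_2 L_2^{\herm}$, and the fact that $\Lambda$ is real diagonal positive (its entries are the singular values, which are positive because $M_1, M_2$ are positive definite, hence $L_1, L_2$ nonsingular). A useful preliminary observation is that taking the Hermitian transpose of the SVD relation gives $L_2^{\herm} L_1 = V \Lambda U^{\herm}$, which I will need for the orthogonality statement.

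First I would check $V_1^{\herm} V_2 = I$. Compute
\begin{align*}
 V_1^{\herm} V_2 = \Lambda^{-\frac{1}{2}} U^{\herm} L_1^{\herm} L_2 V \Lambda^{-\frac{1}{2}}
 = \Lambda^{-\frac{1}{2}} U^{\herm} (U \Lambda V^{\herm}) V \Lambda^{-\frac{1}{2}}
 = \Lambda^{-\frac{1}{2}} \Lambda \Lambda^{-\frac{1}{2}} = I,
\end{align*}
using $U^{\herm} U = I$, $V^{\herm} V = I$, and that diagonal matrices commute. Next, for $M_2 V_1 = V_2 \Lambda$: starting from $M_2 V_1 = L_2 L_2^{\herm} L_1 U \Lambda^{-\frac{1}{2}}$, substitute $L_2^{\herm} L_1 = V \Lambda U^{\herm}$ to get $L_2 V \Lambda U^{\herm} U \Lambda^{-\frac{1}{2}} = L_2 V \Lambda \Lambda^{-\frac{1}{2}} = (L_2 V \Lambda^{-\frac{1}{2}}) \Lambda = V_2 \Lambda$, again using $U^{\herm} U = I$ and commutativity of diagonal matrices. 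The third identity $M_1 V_2 = V_1 \Lambda$ is entirely symmetric: $M_1 V_2 = L_1 L_1^{\herm} L_2 V \Lambda^{-\frac{1}{2}} = L_1 (U \Lambda V^{\herm}) V \Lambda^{-\frac{1}{2}} = L_1 U \Lambda \Lambda^{-\frac{1}{2}} = V_1 \Lambda$.

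There is essentially no hard part here; the only thing to be careful about is bookkeeping with the half-powers of $\Lambda$ and the placement of the unitary factors, and making sure $\Lambda$ is invertible (guaranteed by positive definiteness of $M_1 M_2$, whose eigenvalues are the squares of the diagonal entries of $\Lambda$ per Theorem \ref{Thm:ProdEig}). One could alternatively remark that this lemma is the SVD-analogue of Lemmas \ref{Lem:CrosScalarFactors} and \ref{Lem:CholScalarFactors}, differing only in that here the eigenvalue scaling $\Lambda^2$ is split symmetrically between the two relations $M_2 V_1 = V_2 \Lambda$ and $M_1 V_2 = V_1 \Lambda$ rather than being concentrated in one of them — which is precisely the feature that avoids forming $\Lambda^2$ and thereby recovers the lost digits of accuracy discussed above.
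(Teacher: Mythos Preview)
Your proof is correct and follows essentially the same approach as the paper: direct verification of the three identities by substituting the definitions of $V_1$, $V_2$, the Cholesky factorizations, and the SVD relation (together with its Hermitian transpose $L_2^{\herm}L_1 = V\Lambda U^{\herm}$). The paper's proof is slightly terser, omitting the explicit justification of invertibility of $\Lambda$ and the comparative remark at the end, but the algebraic steps are identical.
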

\begin{proof}
It holds
\begin{align*}
V_1^{\herm}V_2 = {\Lambda}^{-\frac{1}{2}}U^{\herm}L_1^{\herm}L_2V{\Lambda}^{-\frac{1}{2}} = {\Lambda}^{-\frac{1}{2}}U^{\herm}U\Lambda V^{\herm}V{\Lambda}^{-\frac{1}{2}} = I
\end{align*}
and
\begin{align*}
 M_2V_1 = M_2L_1U{\Lambda}^{-\frac{1}{2}} = L_2L_2^{\herm}L_1U{\Lambda}^{-\frac{1}{2}} = L_2 V\Lambda U^{\herm} U {\Lambda}^{-\frac{1}{2}} = L_2V{\Lambda}^{\frac{1}{2}} = V_2\Lambda.
 \end{align*}
 $M_1V_2=V_1\Lambda$ is proved in the same way.  
\end{proof}

Lemma \ref{Lem:SVDScalarFactors} states that the scaling described in \eqref{Eq:GenScaling} boils down to a scaling factor of 1 as $\lambda_1=\lambda_2$. This suggests Algorithm \ref{Alg:H1svd}.

\begin{algorithm}[ht]
\caption{Compute eigenvectors of BSE matrix of form I, using the singular value decomposition.\label{Alg:H1svd}}
\begin{algorithmic}[1]
\Require $A=A^{\herm}\in\mathbb{C}^{n\times n}$, $B=B^{\herm}\in\mathbb{C}^{n\times n}$, defining a BSE matrix of form I $H=\begin{bmatrix}
                                                                                                                 A & B\\
                                                                                                                 -B & -A
                                                                                                                \end{bmatrix}$, s.t. $=A+B$ and $A-B$ are positive definite.
\Ensure $V\in\mathbb{C}^{2n\times n}$, $\Lambda=\diag{\lambda_1,\dots,\lambda_n}\in\mathbb{R}^{n\times n}_+$ s.t. $HV=V\Lambda$.
\State Compute Cholesky factorization $L_1L_1^{\herm} = M_1$.
\State Compute Cholesky factorization $L_2L_2^{\herm} = M_2$.
\State Compute singular value decomposition $U_{SVD}\Lambda V_{SVD}^{\herm} = L_1^{\herm}L_2$
\State $V_1 \leftarrow L_1U_{SVD} {\Lambda}^{-\frac{1}{2}}$
\State $V_2 \leftarrow L_2 V_{SVD}{\Lambda}^{-\frac{1}{2}}$
\State $V\leftarrow\begin{bmatrix}
           \frac{1}{2}(V_1+V_2)\\
           \frac{1}{2}(V_2-V_1)
          \end{bmatrix}$.
\end{algorithmic}
\end{algorithm}

The main difference between the SVD-based algorithm (Algorithm \ref{Alg:H1svd}) and the other ones, from a numerical point of view, is that the eigenvalue matrix $\Lambda$ is computed directly by the SVD and not as a square root of another diagonal matrix $D$.

The way real BSE matrices of form II \eqref{Eq:Hform2} are treated in \cite{ShaDYetal16} is based on the same idea. 



We can expect to see a higher accuracy in the eigenvalues than in the square root and the Cholesky approach, because the eigenvalues are computed directly, using a backward-stable method for the singular value decomposition. Perturbation theory \cite{SteS90} yields an approximate error of
\begin{align}
 |\lambda-\hat{\lambda}| \approx {\epsilon} 
 \frac{\|H\|_2}{s(\lambda)}.
\end{align}
In the other approaches, a similar approximation only holds for the error of the squared eigenvalues $\lambda^2$, and translates in form of \eqref{Eq:SquaredErr} to the non-squared ones.

\subsection{ Comparison}
\label{Sec:Implementation}

In recent years, various packages have been developed to facilitate the computation of the electronic structure of materials. See e.g. \cite{GulKMetal14,DesSSetal12,SanFMetal19}, or \url{https://www.nomad-coe.eu/externals/codes} for an overview. In particular, computing excited states via methods based on many-body-perturbation theory has come into focus, as powerful computational resources become more widely available. Here, the Bethe-Salpeter approach constitutes a state-of-the art method for computing optical properties such as the optical absorption spectrum. To this end, Algorithm \ref{Alg:H1sqrt} is typically used to solve the resulting eigenvalue problem after the matrices $A$ and $B$ have been set up \cite{SanMK14}.

The main contribution of the previous section was to provide a unified frame of reference, which can be used to derive the existing approach (Algorithm \ref{Alg:H1sqrt}) as well as two new ones (Algorithm \ref{Alg:H1chol} and Algorithm \ref{Alg:H1svd}). Due to this unified framework, the similarities between the realizations of the different approaches become apparent. In all algorithms we clearly see  four steps.
\begin{enumerate}
 \item Preprocessing: Setup a matrix $M$.
 \item Decomposition: Compute spectral, respectively, singular value decomposition of $M$.
 \item Postprocessing: Transform resulting vectors to (left and right) eigenvectors of matrix $(A+B)(A-B)$.
 \item Final setup: Form eigenvectors of original BSE matrix. 
\end{enumerate}

A detailed compilation is given in Table \ref{Tab:Processing}.

\begin{table}[H]
\centering
{\renewcommand{\arraystretch}{2}
 \begin{tabular}{p{0.155\textwidth}|p{0.235\textwidth}|p{0.235\textwidth}|p{0.235\textwidth}}
  & SQRT (Alg. \ref{Alg:H1sqrt}) & CHOL (Alg. \ref{Alg:H1chol}) &CHOL+SVD (Alg. \ref{Alg:H1svd}) \\\hline
  \small 1. Pre\-processing  & $S=({A-B})^{\frac{1}{2}}$,\hfill \textcolor{blue}{($11n^3$)}\newline\newline  $M=S(A+B)S$\hfill \textcolor{blue}{($4n^3$)} & $LL^{\herm} = A-B$, \hfill\textcolor{blue}{($\frac{1}{3}n^3$)}\newline\newline $M=L(A+B)L^{\herm}$\hfill \textcolor{blue}{($2n^3$)}&$L_1L_1^{\herm} = A-B$,\hfill\textcolor{blue}{($\frac{1}{3}n^3$)}\newline  $L_2L_2^{\herm} = A+B$, \hfill\textcolor{blue}{($\frac{1}{3}n^3$)}\newline $M=L_1^{\herm}L_2$ \hfill\textcolor{blue}{($n^3$)}\\\hline
  \small2. Decomposition &$M=V_M\Lambda^2 V_M^{\herm}$\hfill \textcolor{blue}{($9n^3$)} & $M=V_M\Lambda^2 V_M^{\herm}$ \hfill\textcolor{blue}{($9n^3$)} & $M=U_{SVD}\Lambda V_{SVD}^{\herm}$ \hfill\textcolor{blue}{($21n^3$)} \\\hline
  \small3. Post\-processing & $V_1 := S^{-1} V_M{\Lambda}^{\frac{1}{2}}$, \hfill\textcolor{blue}{($\frac{8}{3}n^3$)}\newline
 $V_2 = S V_M{\Lambda}^{-\frac{1}{2}}$ \hfill\textcolor{blue}{($2n^3$)}&
 $V_1= L^{-H}V_M{\Lambda}^{\frac{1}{2}}$, \hfill\textcolor{blue}{($n^3$)}\newline
$V_2= LV_M{\Lambda}^{-\frac{1}{2}}$ \hfill\textcolor{blue}{($n^3$)}
 &$V_1 = L_1U_{SVD} {\Lambda}^{-\frac{1}{2}}$, \hfill\textcolor{blue}{($n^3$)}\newline
$V_2 = L_2 V_{SVD}{\Lambda}^{-\frac{1}{2}}$ \hfill\textcolor{blue}{($n^3$)}\\\hline
  \small4. Final setup&\multicolumn{3}{c}{\renewcommand{\arraystretch}{1}
  $V=\begin{bmatrix}
    \frac{1}{2}(V_1+V_2)\\
    \frac{1}{2}(V_2-V_1)
   \end{bmatrix}$.}
 \end{tabular} 
 }
\caption{Algorithmic steps of the different methods. The number in brackets estimates the number of flops, where lower-order terms are neglected.\label{Tab:Processing}}
\end{table} 
\renewcommand{\arraystretch}{1}

Seeing the algorithms side by side enables a direct comparison. The amount of flops is based on estimates for sequential, non blocked implementations \cite{GolV13}, and lower order terms i.e. $\mathcal{O}(n^2)$ and $\mathcal{O}(n)$, are neglected. The preprocessing step is most expensive in the square root approach. Computing the square root of a Hermitian matrix involves the solution of a Hermitian eigenvalue problem. Additionally, the matrices $S$ and $M$ need to be set up, using 3 matrix-matrix products.  This makes the preprocessing step even more expensive as the following ``main'' eigenvalue computation. The CHOL and the CHOL+SVD approach, on the other hand, only rely on one or two Cholesky factorizatons and matrix multiplications, which are comparatively cheap to realize. The computational effort in the decomposition step is the highest in the CHOL+SVD step. The post-processing step again is most expensive in the SQRT approach, because the matrix $S$ is a general square matrix, while the $L$ matrices in CHOL and CHOL+SVD are triangular. In total, SQRT takes an estimated amount of $28\frac{2}{3}n^3$ flops, $CHOL$ takes $13\frac{1}{3}n^3$ flops and CHOL+SVD takes $24\frac{2}{3}n^3$ flops. The classical QR algorithm applied to the full, non-Hermitian matrix takes about $25(2n)^3=200n^3$ flops (not including the computation of eigenvectors from the Schur vectors). Solving the Hermitian-definite eigenvalue problem \eqref{Eq:BSEPencil} can exploit symmetry, but still acts on the large problem and can be expected to perform $14(2n)^3=112n^3$ flops. 

According to this metric, we expect both new approaches to perform faster than the square root approach. The actual performance of algorithms on modern architectures is not simply determined by the number of operations performed, but by their parallelizability and communication costs. All presented approaches have a high computational intesity of $\mathcal{O}(n^3)$, such that the memory bandwith is not likely to be a bottleneck. All methods rely on the same standard building blocks from numerical linear algebra, for which optimized versions (e.g. blocked variants for cache-efficiency) are available. This setting makes a fair comparison possible where the arithmetic complexity has a high explanatory power. 

To summarize, we expect CHOL to be about twice as fast as SQRT, while keeping the same accuracy. CHOL+SVD performs more computations than CHOL, and will take more time, but could improve the accuracy of the computations. It might be faster than SQRT, depending on how efficient the diagonalizations in SQRT and the SVD in CHOL+SVD are implemented. 

The comparison in Table \ref{Tab:Processing} is helpful when implementing the new approaches in codes that already use the square root approach. For the Cholesky approach we need to substitute the computation of the matrix square root with the computation of a Cholesky factorization (LAPACK routine \texttt{zpotrf}), compute the matrix $M$ using triangular matrix multiplications (\texttt{ztrmm}), and use a triangular solve (\texttt{ztrsm}) and a triangular matrix multiplication (\texttt{ztrmm}) in the post-processing step. For the CHOL+SVD approach, an additional Cholesky factorization is necessary and the Hermitian eigenvalue decomposition is substituted by a singular value decomposition (\texttt{zgesvd}). The post-processing involves two triangular matrix products instead of a matrix inversion and two general matrix products.

\section{Numerical Experiments}

We implemented and compared serial versions of Algorithms  \ref{Alg:H1sqrt}, \ref{Alg:H1chol} and \ref{Alg:H1svd} in MATLAB. They compute positive eigenvalues and associated eigenvectors of a BSE matrix $H\in\mathbb{C}^{2n\times 2n}$ of form I   \eqref{Eq:Hform1}, which fulfills the definiteness property $\Sigma H >0$ discussed in Section \ref{Sec:Theory}. The eigenvalues are given as a diagonal matrix $D\in\mathbb{R}^{n\times n}$. The eigenvectors  $V\in\mathbb{C}^{2n\times n}$ are scaled s.t. $\Sigma$-orthogonality holds, i.e. $V^{\herm}\Sigma V = I_n$. The $\Sigma$-orthogonality is an important property in the application. It is exploited in order to construct the polarizability operator ultimately used for the computation of the absorption spectrum. 

We also include the MATLAB eigensolver \texttt{eig} for comparison. \texttt{eig} can either work on the BSE matrix $H$ or solve the generalized eigenvalue problem for the matrix pencil $(\Sigma H, \Sigma)$. In this formulation, both matrices are Hermitian and one is positive definite, which allows for a faster computation. 

The experiments were performed on a laptop with an Intel(R) Core(TM) i7-8550U processor using MATLAB R2018a.

The first experiments aim to assess the accuracy of the computed eigenvalues. The matrices $A$ and $B$ are of size $n=200$ and are created in the following way for a given value $\kappa\in\mathbb{R}$. Let $d=\begin{bmatrix}1,\dots,\frac{1}{3}\kappa\end{bmatrix}\in\mathbb{R}^{n}$ be a vector with elements equally spaced between $1$ and $\frac{1}{3}\kappa$. The BSE matrix is constructed as
\begin{align*}
 H = \begin{bmatrix}
      A & B \\
      -B & -A
     \end{bmatrix} := 
     \begin{bmatrix}
      Q&0\\
      0&Q 
     \end{bmatrix}^{\herm}
     \begin{bmatrix}
      \diag{d}&\frac{1}{2}\diag{d}\\
      -\frac{1}{2}\diag{d}&-\diag{d}
     \end{bmatrix}\begin{bmatrix}
      Q&0\\
      0&Q 
     \end{bmatrix},  
\end{align*}
where $Q\in\mathbb{C}^{n\times n}$ is a randomly generated, unitary matrix. It can be shown, that $\cond{H}=\kappa$ and the eigenvalues are given as $\frac{\sqrt{3}}{2}d$. 

\begin{table}
\centering
\begin{tabular}{lccccr}
\toprule
Method &\multicolumn{4} {c}{Relative Error}  & Runtime\\
\hline
&$\kappa=10$ & $\kappa=10^3$&$\kappa=10^6$ & $\kappa=10^9$&\\
\cmidrule{2-5}
\texttt{eig}            & 1.28e-14&5.08e-14&3.82e-11&1.26e-08&62.7 ms\\
generalized \texttt{eig}& 7.89e-15&6.67e-15&1.89e-11&1.97e-09&10.7 ms\\
\texttt{haeig}          & 4.73e-15&7.82e-15&4.32e-11&2.23e-08&50.9 ms\\
SQRT                    & 5.45e-15&3.11e-12&4.64e-06&1.39e+00&5.87 ms\\
CHOL                    & 4.23e-15&2.17e-12&1.32e-06&1.19e-05&3.09 ms\\
CHOL + SVD              & 1.23e-15&2.20e-14&2.53e-11&2.38e-09&4.28 ms\\\bottomrule
\end{tabular}
\caption{Comparison of different methods for eigenvalue computation for Bethe-Salpeter matrix of form I of size $400\times 400$.}\label{Tab:SmallEV}
\end{table}

Table \ref{Tab:SmallEV} shows the relative error in the smallest eigenvalue $\lambda=\frac{\sqrt{3}}{2}$, using the methods discussed in Section \ref{Sec:Algs}. We also included the routine \texttt{haeig} from the SLICOT package \cite{BenMX98b, BenKSetal10}. Because \texttt{haeig} can only compute eigenvalues, not eigenvectors, we also only compute eigenvalues in the other methods in order to make the runtimes comparable.

The MATLAB \texttt{eig} function has the largest runtime. \texttt{haeig} is slightly faster, because it exploits the available Hamiltonian structure. However, the routine is not optimized for cache-reuse, which is why this effect can not be observed more clearly and vanishes for larger matrices. The generalized eigenvalue problem can be solved much faster, because it can be transformed to a Hermitian eigenvalue problem of size $2n\times 2n$. The other methods ultimately act on Hermitian matrices of size $n\times n$, which explains the much lower runtimes. 

The observed eigenvalue errors also comply with the error analysis given in Section \ref{Sec:SVD}. The state-of-the-art square root approach performs even worse than expected, yielding a completely wrong eigenvalue for matrices with a condition number $\kappa=10^9$. In the application context, the small eigenvalues are of special interest. They correspond to bound exciton states, representing a strong electron-hole interaction. They are the reason why the Bethe-Salpeter approach is used instead of simpler schemes based on time-dependent density functional theory \cite{SagA09}. The smallest eigenvalues suffer the most from this numerical inaccuracy.

The second experiment aims to asses the runtime of the sequential implementations, including the eigenvector computation in the measurement. The matrices $A$ and $B$ are setup as random matrices, where the diagonal of $A$ has been scaled up in order to guarantee the definiteness property $\Sigma H >0$. The measured runtimes are found in Figure \ref{Fig:Runtimes} and serve as a rough indicator of computational effort. 

 \newlength\figureheight
\newlength\figurewidth

\begin{figure}[H]
\centering
\setlength\figureheight{0.4\textwidth}
\setlength\figurewidth{0.7\textwidth}
%
%
\definecolor{mycolor1}{rgb}{0.00000,0.44700,0.74100}%
\definecolor{mycolor2}{rgb}{0.85000,0.32500,0.09800}%
\definecolor{mycolor3}{rgb}{0.92900,0.69400,0.12500}%
\definecolor{mycolor4}{rgb}{0.49400,0.18400,0.55600}%
\definecolor{mycolor5}{rgb}{0.46600,0.67400,0.18800}%
\begin{tikzpicture}

\begin{axis}[%
width=0.951\figurewidth,
height=\figureheight,
at={(0\figurewidth,0\figureheight)},
scale only axis,
xmin=0,
xmax=2000,
xlabel={Matrix size $n$},
ylabel={Runtime in seconds},
ymin=0,
ymax=120,
axis background/.style={fill=white},
title style={font=\bfseries},
axis x line*=bottom,
axis y line*=left,
legend style={at={(0.1,0.6)},font=\tiny, anchor=south west,legend cell align=left, align=left, draw=white!15!black}
]
\addplot [color=mycolor1, mark=o, mark options={solid, mycolor1}]
  table[row sep=crcr]{%
10	0.010026\\
50	0.142951\\
100	0.965847\\
200	2.816384\\
500	14.073009\\
1000	66.380098\\
};
\addlegendentry{MATLAB eig}

\addplot [color=mycolor2, mark=o, mark options={solid, mycolor2}]
  table[row sep=crcr]{%
10	0.016438\\
50	0.06388\\
100	0.210707\\
200	0.396942\\
500	3.28118\\
1000	16.229203\\
1500	47.006843\\
2000	101.396473\\
};
\addlegendentry{MATLAB generalized eig}

\addplot [color=mycolor3, mark=o, mark options={solid, mycolor3}]
  table[row sep=crcr]{%
10	0.023727\\
50	0.023042\\
100	0.13059\\
200	0.43694\\
500	1.960937\\
1000	7.14946\\
1500	17.950824\\
2000	35.856785\\
};
\addlegendentry{SQRT approach}

\addplot [color=mycolor4, mark=o, mark options={solid, mycolor4}]
  table[row sep=crcr]{%
10	0.021901\\
50	0.008087\\
100	0.068491\\
200	0.218528\\
500	1.02055\\
1000	3.709501\\
1500	10.116111\\
2000	21.062858\\
};
\addlegendentry{CHOL approach}

\addplot [color=mycolor5, mark=o, mark options={solid, mycolor5}]
  table[row sep=crcr]{%
10	0.018945\\
50	0.006926\\
100	0.024317\\
200	0.370956\\
500	1.463673\\
1000	6.02471\\
1500	15.801772\\
2000	32.537312\\
};
\addlegendentry{CHOL+SVD approach}

\end{axis}
\end{tikzpicture}%
 \caption{ Runtimes for different methods, $A,B\in\mathbb{C}^{n\times n}$ with varying matrix sizes. \label{Fig:Runtimes}}
\end{figure}
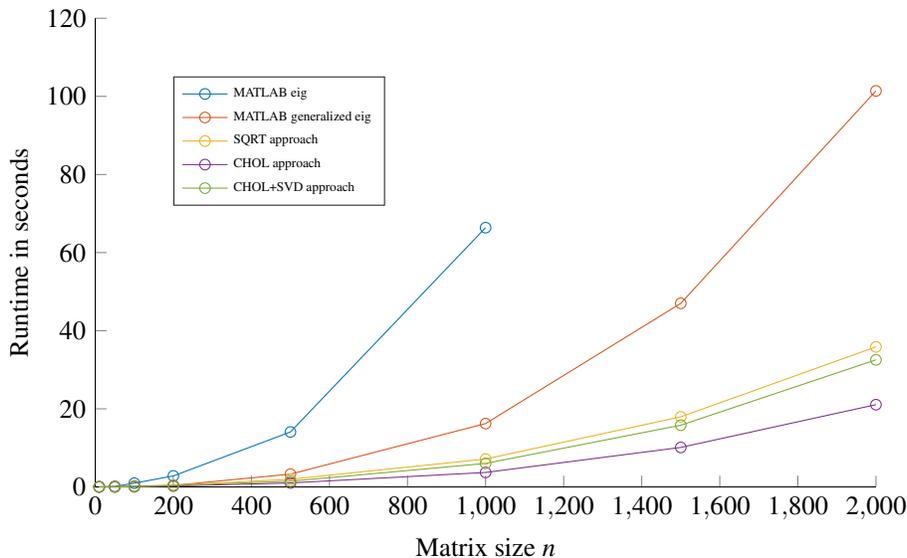

As expected, the Cholesky approach yields the fastest runtime of all approaches. The SVD approach also performs better than the square root approach. However, this picture could easily look different in another computational setup. 
%
 An approach based on the \texttt{eig} command becomes prohibitively slow, when larger matrices are considered. Matrices in real applications become extremely large, up to dimensions of order $100\,000$,  in order to get reasonable results.
 The effect would be even more drastic in a parallel setting, as the solution of a nonsymmetric dense eigenvalue problem is  notoriously difficult to parallelize.


Figure \ref{Fig:Sigma} shows the achieved $\Sigma$-orthogonality of the eigenvector matrices for matrices with certain condition numbers. To this end, we manipulate the diagonal of the randomly generated matrix $A$ such that badly conditioned BSE matrices $H$ are generated. For the square root and the Cholesky approach, the $\Sigma$-orthogonality breaks down completely for badly conditioned matrices. This can have dramatic consequences and lead to completely wrong results, when further computations rely on this property. 

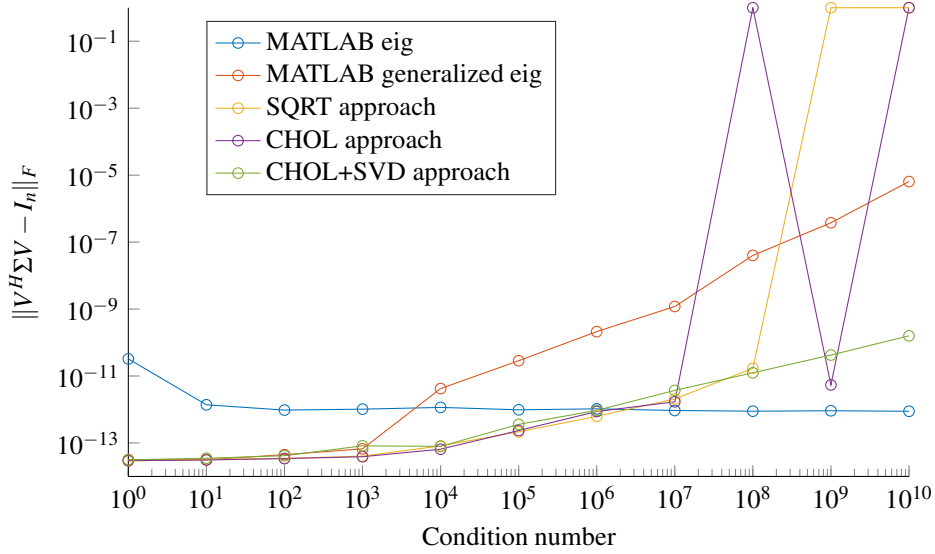
\begin{figure}[h] 
\centering
\setlength\figureheight{0.4\textwidth}
\setlength\figurewidth{0.7\textwidth}
%
%
\definecolor{mycolor1}{rgb}{0.00000,0.44700,0.74100}%
\definecolor{mycolor2}{rgb}{0.85000,0.32500,0.09800}%
\definecolor{mycolor3}{rgb}{0.92900,0.69400,0.12500}%
\definecolor{mycolor4}{rgb}{0.49400,0.18400,0.55600}%
\definecolor{mycolor5}{rgb}{0.46600,0.67400,0.18800}%
\begin{tikzpicture}

\begin{axis}[%
width=0.951\figurewidth,
height=\figureheight,
at={(0\figurewidth,0\figureheight)},
scale only axis,
xmode=log,
xmin=1,
xmax=10000000000,
xminorticks=true,
xlabel={Condition number},
ylabel={$\|V^H\Sigma V - I_n\|_F$},
ymode=log,
ymin=1e-14,
ymax=1.00000000000001,
yminorticks=true,
axis background/.style={fill=white},
title style={font=\bfseries},
axis x line*=bottom,
axis y line*=left,
legend style={at={(0.1,0.6)}, anchor=south west, legend cell align=left, align=left, draw=white!15!black}
]
\addplot [color=mycolor1, mark=o, mark options={solid, mycolor1}]
  table[row sep=crcr]{%
1	3.24839701354267e-11\\
10	1.38130134657597e-12\\
100	9.58630551148589e-13\\
1000	1.02120443005206e-12\\
10000	1.15325814329188e-12\\
100000	9.77364092781582e-13\\
1000000	1.04462529068071e-12\\
10000000	9.32247535122271e-13\\
100000000	8.88459086274439e-13\\
1000000000	9.1527218975111e-13\\
10000000000	8.83484947520461e-13\\
};
\addlegendentry{MATLAB eig}

\addplot [color=mycolor2, mark=o, mark options={solid, mycolor2}]
  table[row sep=crcr]{%
1	3.17750055355442e-14\\
10	3.14401044806276e-14\\
100	4.47019975504983e-14\\
1000	6.66545181436257e-14\\
10000	4.20433349469036e-12\\
100000	2.86166067940511e-11\\
1000000	2.11603968406753e-10\\
10000000	1.19139936242437e-09\\
100000000	3.9924789995784e-08\\
1000000000	3.77402979901346e-07\\
10000000000	6.42897709069732e-06\\
};
\addlegendentry{MATLAB generalized eig}

\addplot [color=mycolor3, mark=o, mark options={solid, mycolor3}]
  table[row sep=crcr]{%
1	2.91705643229762e-14\\
10	3.23462665928615e-14\\
100	3.43215053818352e-14\\
1000	4.03723820507963e-14\\
10000	8.15642772330854e-14\\
100000	2.14657451991964e-13\\
1000000	6.24290798835974e-13\\
10000000	2.06745852439192e-12\\
100000000	1.69904112503673e-11\\
1000000000	1\\
10000000000	1.00000000000001\\
};
\addlegendentry{SQRT approach}

\addplot [color=mycolor4, mark=o, mark options={solid, mycolor4}]
  table[row sep=crcr]{%
1	2.95568585263705e-14\\
10	3.09878576470553e-14\\
100	3.41688223883668e-14\\
1000	3.87485500910984e-14\\
10000	6.51197433464706e-14\\
100000	2.35005143104319e-13\\
1000000	8.67281077912016e-13\\
10000000	1.69550413582318e-12\\
100000000	1\\
1000000000	5.42657640545693e-12\\
10000000000	1.00000000000001\\
};
\addlegendentry{CHOL approach}

\addplot [color=mycolor5, mark=o, mark options={solid, mycolor5}]
  table[row sep=crcr]{%
1	3.12208064950897e-14\\
10	3.50023090122589e-14\\
100	4.15347277967316e-14\\
1000	8.18647554639377e-14\\
10000	7.91018019353045e-14\\
100000	3.59377181722864e-13\\
1000000	9.40225542285529e-13\\
10000000	3.71202664913919e-12\\
100000000	1.24721212271724e-11\\
1000000000	4.22452466090451e-11\\
10000000000	1.58402600215786e-10\\
};
\addlegendentry{CHOL+SVD approach}

\end{axis}
\end{tikzpicture}%
 \caption{ Deviation from $\Sigma$-orthogonality for different methods, $A,B\in\mathbb{C}^{200\times 200}$ with a certain condition number. \label{Fig:Sigma}}
\end{figure}


%
%
%

To show the applicability to real life examples, we extracted a Bethe-Salpeter matrix corresponding to the excitation of Lithium-Fluoride from the \texttt{exciting} software package \cite{GulKMetal14}. Computational details on how the matrix is generated can be found in the documentation\footnote{\url{http://exciting-code.org/carbon-excited-states-from-bse}}. Here, it is pointed out that a Tamm-Dancoff approximation, i.e. setting the off-diagonal block $B$ to zero, already yields satisfactory results. The resulting $2560\times2560$ BSE matrix has a condition number (computed using \texttt{cond} in MATLAB)  of $5.33$. We do not expect the algorithms to suffer from the numerical difficulties observed in the first example. 

\begin{table}
\resizebox{\textwidth}{!}{
\begin{tabular}{lllllll}
\toprule
&$\lambda_1$&$\lambda_2$&$\lambda_3$& Runtime\\
\texttt{eig}            &4.6423352497493209e-01&4.6524229149750918e-01&4.6872644706731720e-01&32.49 s\\
generalized \texttt{eig}&4.6423352497493126e-01&4.6524229149750407e-01&4.6872644706732447e-01&10.62 s\\
\texttt{haeig}          &4.6423352497493725e-01&4.6524229149750940e-01&4.6872644706732514e-01&71.43 s\\
SQRT                    &4.6423352497493120e-01&4.6524229149750490e-01&4.6872644706732541e-01&3.44  s\\
CHOL                    &4.6423352497493031e-01&4.6524229149750573e-01&4.6872644706732414e-01&2.06  s\\
CHOL + SVD              &4.6423352497493092e-01&4.6524229149750473e-01&4.6872644706732453e-01&3.41  s\\
TDA                     &4.6427305979874345e-01&4.6528180480128906e-01&4.6877150201685513e-01&0.88  s\\   \bottomrule
\end{tabular}
}
\caption{Computed eigenvalues for Lithium Fluoride example.}\label{Tab:LiF}
\end{table}

The three smallest eigenvalues computed by different methods are found in Table \ref{Tab:LiF}. Indeed, all approaches coincide in the first 14 significant digits. The Tamm-Dancoff approximation (TDA) applies MATLAB \texttt{eig} on the diagonal Block $A$ and provides eigenvalues, that are correct up to 4 significant digits which is sufficient for practical applications. The measured runtimes reflect the results of the other experiments. Now the lack of low-level optimization in the \texttt{haeig} routine becomes apparent and leads to the lowest performance of all approaches.

\section{Conclusions}

We presented two new approaches for solving the Bethe-Salpeter eigenvalue problem as it appears in the computation of optical properties of crystalline systems. The presented methods are superior to the one currently used, which is based on the computation of a matrix square root. Computing the matrix square root constitutes a high computational effort for nondiagonal matrices. Our first proposed method substitutes the matrix square root with a Cholesky factorization which can be computed much easier. The total runtime is reduced by about 40\% in preliminary experiments, while the same accuracy is achieved. In order to achieve a higher accuracy we proposed a second method, which also relies on Cholesky factorizations and uses a singular value decomposition instead of an eigenvalue decomposition. 


We also gave new theoretical results on structured matrices, which served as a foundation of the proposed algorithms.


\addcontentsline{toc}{section}{References}

\end{document}